\documentclass[preprint,12pt]{elsarticle}

\usepackage{amsmath}
\usepackage{amsfonts}
\usepackage{amssymb}
\usepackage{amsthm}
\usepackage{booktabs}
\usepackage{ragged2e}
\usepackage[utf8]{inputenc}
\usepackage[english]{babel}
\usepackage{graphicx}
 \usepackage{parskip}
\usepackage{textpos}
\usepackage[utf8]{inputenc}
\usepackage[T1]{fontenc}
\usepackage{helvet}
\usepackage{amsfonts}
\usepackage{amsmath}
\usepackage{amssymb}
\usepackage{tikz, graphicx}
\usepackage{hyperref}
\biboptions{numbers,sort&compress}
\usepackage{bigints}
\usepackage{xcolor}
\usepackage{algorithm}
\usepackage{algpseudocode}
\usepackage{geometry}

\newtheorem{theorem}{Theorem}[section]

\newtheorem{definition}[theorem]{Definition}
\newtheorem{corollary}[theorem]{Corollary}

\newtheorem{lemma}[theorem]{Lemma}

\newtheorem{remark}[theorem]{Remark}

\usepackage{float}
\usepackage{graphicx} 
\numberwithin{equation}{section}

\begin{document}
\begin{frontmatter}



 \title{\textbf{A Class of Function-Preserving Neural Network Operators with Applications to Modulated Signal Denoising}}



\author{\textbf{Sachin Saini}}
\ead{sachin\_s@ma.iitr.ac.in; sbisaini003@gmail.com}


\begin{abstract}
In this paper, we introduce a class of function-preserving neural network operators designed to preserve a prescribed strictly positive function.
We examine the approximation behaviour of the proposed operators on the space of continuous functions and show that they are positive, linear, well-defined, and bounded. Specifically, we demonstrate uniform convergence on compact intervals and get quantitative estimates of the approximation error in terms of the modulus of continuity. To support the theoretical analysis, we present several numerical experiments focusing on the preservation of exponential and logarithmic functions. These examples show how well the proposed operators perform and compare them with existing neural network operators. Also, an application to denoising modulated signals is provided, demonstrating the practical relevance of the proposed approach.
\end{abstract}

\begin{keyword}
Sigmoidal function\sep Neural network operators\sep Function preservation\sep Uniform convergence\sep Signal denoising.

\MSC[2020] 41A25\sep 41A30\sep 41A35\sep 41A36\sep  68T07.
\end{keyword}

\end{frontmatter}

\section{Introduction}

The study of \emph{neural network operators} (NNOs) has emerged as a practical and effective approach to function approximation by combining concepts from neural network models with traditional approximation theory. These operators offer explicit representations, enabling the approximation of functions using structures inspired by brain architecture, rather than relying solely on data-driven approaches.

Two significant families of NNOs, the \emph{bell-shaped} and \emph{squashing-type} operators, were presented by Cardaliaguet and Euvrard~\cite{cardaliaguet1992approximation}. This contribution demonstrated a rigorous and explicit relationship between classical approximation theory and neural network models. Building on this framework, Anastassiou~\cite{anastassiou1997rate,anastassiou2000rate,anastassiou2011multivariatesigmoidal,anastassiou2013frac,anastassiou2017approximations}
developed an extensive theory for NNOs, including quantitative rates of convergence, fractional extensions, and multivariate generalizations for a wide class of activation functions. In recent years, the theory has witnessed significant advancements, including density results, quantitative estimates, and nonlinear extensions of  NNOs, which have substantially broadened the scope of the approximation framework~\cite{costarelli2022density,costarelli2022quantitative,costarelli2024durrmeyer,berisha2025riemann,saini2026approximation,kadak2023neural,lin2026construction,chen2015approximation,qian2022neural,majethiya2025modular,baxhaku2025riemann}. These developments have enabled the study of a wider class of activation functions and facilitated more general operator-based constructions.

A significant step forward in the study of NNOs was provided by Costarelli and Spigler~\cite{costarelli2013approxi}. They introduced a family of positive linear operators defined on $C(\mathcal{D})$, where $\mathcal{D}=[c,d]\subset\mathbb{R}$. These operators are generated by sigmoidal activation functions and are defined as
\begin{equation}\label{clasicalLNNO1}
F_m(f;v)
=
\frac{\sum\limits_{j=\lceil mc\rceil}^{\lfloor md\rfloor}
\varrho_\rho(mv-j)\,f\!\left(\tfrac{j}{m}\right)}
{\sum\limits_{j=\lceil mc\rceil}^{\lfloor md\rfloor}
\varrho_\rho(mv-j)},
\quad v\in\mathcal{D}.
\end{equation}
These operators are essential tools in neural network approximation theory, since they are positive, linear, and reproduce constant functions. A Kantorovich-type modification was later proposed in~\cite{costarelli2014convergence}, where pointwise evaluations are replaced by local averages,
thereby extending approximation results to $L^p(\mathcal{D})$, $1\le p<\infty$.

Parallel to these developments, classical approximation theory has long emphasized the importance of \emph{function-preserving operators}. In particular, Bernstein-type constructions and their variants (see, e.g., \cite{lorentz1986bernstein,cardenas2006shape,bede2009approximation,aldaz2009shape,acu2024new,su2022shape,ayman2025shape}) are designed to preserve specific functional features, such as exponential or logarithmic behaviour. Such preservation often leads to significantly improved approximation when the target function exhibits non-polynomial characteristics.

More recently, sampling-type NNOs based on exponential grids have been introduced in~\cite{bajpeyi2021exponential,bajpeyi2023order}. These operators combine sigmoidal activation functions with exponentially spaced nodes and are given by
These operators incorporate a logarithmic-type scaling via exponential sampling and are particularly well-suited to functions defined on positive domains.

Despite their flexibility and strong approximation properties, the existing NNOs~\eqref{clasicalLNNO1} are essentially based on additive or sampling-type constructions and, in general, do not preserve nonlinear structures such as logarithmic, exponential, or multiplicative behaviour. In particular, even in the exponential setting, such structures are only approximated asymptotically and not reproduced exactly. For instance, in general, one has
\[F_m(\ln f;v) \neq \ln\big(F_m(f;v)\big),\]
indicating that such operators are not compatible with logarithmic transformations.

This limitation becomes significant in many applications, including signal processing, financial modelling, and multiplicative systems, where functions often exhibit inherent structural patterns such as exponential growth, logarithmic scaling, or power-law behaviour. In these cases, existing NNOs may struggle to adapt to the problem's intrinsic structure, leading to less effective approximations.

Inspired by these ideas, we provide a novel class of \emph{function-preserving neural network operators}. In order to better depict a particular structural aspect in the resulting approximation, the basic concept is to construct the operators using a prescribed strictly positive function $g$.

More specifically, the operators $T_m^{(g)}$ are defined to satisfy the reproduction property
\[T_m^{(g)}(g;v)=g(v),\] that establishes them apart from other NNOs.

Notable special cases are obtained by taking
\[g(v)=\ln(1+\mu+v) \ \text{and} \ g(v)=e^{\lambda v}, \]
which yield logarithmic and exponential operators, respectively. These constructions preserve the underlying structure functions exactly and are particularly suitable for approximating functions with logarithmic or exponential behaviour.

By including structural preservation into neural network-based approximation, the current framework thereby generalizes existing neural network operator theory and achieves greater flexibility and enhanced approximation accuracy.

\section{Preliminaries}\label{sec:preliminaries}
Let \(\mathcal{D} = [c,d] \subset \mathbb{R}\).  
We denote by \(C(\mathcal{D})\) the set of all continuous real-valued functions defined on \(\mathcal{D}\), equipped with the supremum norm
\[\|f\|_\infty = \sup_{v \in \mathcal{D}} |f(v)|.\]
The term \emph{sigmoidal function} refers to a measurable function $\rho: \mathbb{R} \to \mathbb{R}$ that satisfies
\[\lim_{v \to -\infty} \rho(v) = 0, \quad \lim_{v \to +\infty} \rho(v) = 1.\]
Throughout this work, we assume that \(\rho\) is a non-decreasing sigmoidal function such that it adheres to the conditions listed below:

\begin{enumerate}
    \item[(P1)] $\rho(v)-\tfrac12$ is an odd function, i.e.,
    \[\rho(-v)=1-\rho(v), \quad v\in\mathbb{R};\]
    
    \item[(P2)] $\rho\in C^2(\mathbb{R})$, $\rho$ is increasing on $\mathbb{R}$, and concave for $v\ge0$;
    
    \item[(P3)] $\rho$ has sufficient decay at $-\infty$, namely
    \[
    \rho(v)=\mathcal{O}(|v|^{-1-\beta}) \quad \text{as } v\to -\infty,
    \]
    for some $\beta>1$;
    
    \item[(P4)] $\rho'\in L^\infty(\mathbb{R})$.
\end{enumerate}

\noindent
Under assumptions (P1)-(P3), it follows that
\[ \rho(v)=1-\mathcal{O}(|v|^{-1-\beta}) \quad \text{as } v\to +\infty.\]
The associated activation function is defined by
\[ \varrho_\rho(v) := \frac12\big[\rho(v+1)-\rho(v-1)\big], \quad v\in\mathbb{R}.\]
For more details, one can see \cite{costarelli2024asymptotic,xue2025direct}.

\begin{definition}
The discrete absolute moment of order $\alpha$ for $\alpha\ge0$ is defined as
\[ \mathcal{M}_{\alpha}(\varrho_\rho) := \sup_{v\in\mathbb{R}} \sum_{j\in\mathbb{Z}} |\varrho_\rho(v-j)|\,|v-j|^{\alpha}.\]
\end{definition}

\noindent
Under assumption (P3), it holds that $\mathcal{M}_\alpha(\varrho_\rho)<\infty$ for all $0\le \alpha<\beta$. In particular, since $\beta>1$, we have $\mathcal{M}_1(\varrho_\rho)<\infty$.
For details, one can see \cite{costarelli2024asymptotic}.

\noindent
The following lemma collects the fundamental properties of $\varrho_\rho$ (see, e.g., \cite{costarelli2013approxi,costarelli2014convergence}).

\begin{lemma}\label{lemma-psi-properties}
Let $\varrho_\rho$ be defined as above. Then:
\begin{enumerate}
\item[(i)] $\varrho_\rho(v)\ge0$ for all $v\in\mathbb{R}$ and
\[ \lim_{v\to\pm\infty}\varrho_\rho(v)=0;\]
\item[(ii)] $\varrho_\rho$ is an even function;
    
\item[(iii)]  \(
\sum\limits_{j\in\mathbb{Z}}\varrho_\rho(v-j)=1, \ v\in\mathbb{R},
\) and \( \|\varrho_\rho\|_{L^1(\mathbb{R})}=1; \)
    
\item[(iv)] (Localization)  
For every $\delta>0$,
\[ \lim_{m\to\infty} \sum_{|mv-j|>m\delta}\varrho_\rho(mv-j)=0, \]  uniformly in $v\in\mathbb{R}$;
    
\item[(v)]   
There exists a constant \(   c_0 := \min\limits_{|x|\le 1}\varrho_\rho(x) > 0
\) such that, for all $v\in[c,d]$ and $n\in\mathbb{N}$,
\[\sum_{j=\lceil mc\rceil}^{\lfloor md\rfloor} \varrho_\rho(mv-j)\ge c_0.\]
Indeed, for each $v\in[c,d]$, there exists at least one integer $j$ such that $|mv-j|\le1$;
    
\item[(vi)] $\varrho_\rho(v)=\mathcal{O}(|v|^{-1-\beta})$ as $v\to\pm\infty$;
    
\item[(vii)] The series $\sum\limits_{j\in\mathbb{Z}}\varrho_\rho(v-j)$ converges uniformly on compact sets;
    
\item[(viii)] $\varrho_\rho$ is Lipschitz continuous, and
\[ |\varrho_\rho'(v)| \le \tfrac12\big(|\rho'(v+1)|+|\rho'(v-1)|\big)
    \le \|\rho'\|_\infty, \]
so that a Lipschitz constant is given by $L\le \|\rho'\|_\infty$.
\end{enumerate}
\end{lemma}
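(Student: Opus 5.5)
We prove the items of Lemma~\ref{lemma-psi-properties} in order, each from (P1)--(P4) and the definition $\varrho_\rho(v)=\tfrac12[\rho(v+1)-\rho(v-1)]$.

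\textbf{Items (i), (ii), (vi), (viii).}
\emph{Item (i).} Nonnegativity holds because $\rho$ is non-decreasing. The limits at $\pm\infty$ follow because $\rho(v\pm1)$ tend to the same limit, $0$ or $1$.
\emph{Item (ii).} Evenness follows from (P1):
\[
\varrho_\rho(-v)=\tfrac12[\rho(1-v)-\rho(-1-v)]=\tfrac12[(1-\rho(v-1))-(1-\rho(v+1))]=\varrho_\rho(v).
\]
\emph{Item (vi).} As $v\to-\infty$, both $\rho(v\pm1)$ are $\mathcal O(|v|^{-1-\beta})$, so $\varrho_\rho(v)=\mathcal O(|v|^{-1-\beta})$. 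The case $v\to+\infty$ follows by evenness.
\emph{Item (viii).} Differentiate: $\varrho_\rho'(v)=\tfrac12[\rho'(v+1)-\rho'(v-1)]$. The triangle inequality and (P4) give $|\varrho_\rho'|\le\|\rho'\|_\infty$. Lipschitz continuity then follows from the mean value theorem.

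\textbf{Item (iii).} Take a finite partial sum $\sum_{j=-N}^{N}\varrho_\rho(v-j)$. It telescopes to
\[
\tfrac12\big[\rho(v+N+1)+\rho(v+N)-\rho(v-N)-\rho(v-N-1)\big],
\]
which tends to $\tfrac12(1+1-0-0)=1$ as $N\to\infty$.

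For the integral, use $\int_{-A}^{A}[\rho(v+1)-\rho(v-1)]\,dv=\int_{A-1}^{A+1}\rho-\int_{-A-1}^{-A+1}\rho$. As $A\to\infty$ this tends to $2\cdot1-2\cdot0=2$. Hence $\|\varrho_\rho\|_{L^1}=1$, using $\varrho_\rho\ge0$.

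\textbf{Items (vii) and (iv).} Both come from the tail estimate (vi). For $|v|\le K$ and $|j|$ large, $\varrho_\rho(v-j)\le C|j-K|^{-1-\beta}$, which is summable. The Weierstrass M-test then gives (vii).

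For (iv), the sum over $|mv-j|>m\delta$ is at most $\sum_{|k+\theta|>m\delta}C|k+\theta|^{-1-\beta}$, where $\theta$ is the fractional part of $mv$. This is bounded by $C'(m\delta)^{-\beta}$ uniformly in $\theta$. Alternatively, it is at most $(m\delta)^{-1}\mathcal M_1(\varrho_\rho)$. Either bound tends to $0$ uniformly in $v$.

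\textbf{Item (v).} This is the step needing most care.
\begin{enumerate}
\item[(a)] \emph{Positivity of $c_0$.} For $|x|\le1$, $\varrho_\rho(x)>0$ because $\rho$ is strictly increasing and $x+1>x-1$. By continuity the minimum $c_0$ over the compact set $[-1,1]$ is positive.
\item[(b)] \emph{Existence of a good index.} For $v\in[c,d]$ we need some $j\in[\lceil mc\rceil,\lfloor md\rfloor]$ with $|mv-j|\le1$. Take $j=\lfloor mv\rfloor$ if $\lfloor mv\rfloor\ge\lceil mc\rceil$, and otherwise take $j=\lceil mv\rceil$. This requires the index range to be nonempty, which holds once $m(d-c)\ge1$, so the statement is understood for such $m$.
\item[(c)] \emph{Conclusion.} Since all terms are nonnegative by (i), the sum is at least $\varrho_\rho(mv-j)\ge c_0$.
\end{enumerate}
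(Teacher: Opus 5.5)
Your proof is correct. The paper does not prove this lemma itself: it refers all eight items to the Costarelli--Spigler papers and adds only the remark, for (v), that some integer $j$ satisfies $|mv-j|\le 1$. Your version is a self-contained derivation from (P1)--(P4), using:
\begin{itemize}
\item telescoping partial sums for the partition of unity;
\item the shifted-integral identity for the $L^1$ norm;
\item either the tail bound or $\mathcal{M}_1(\varrho_\rho)/(m\delta)$ for localization.
\end{itemize}
What it adds is verifiability rather than a different idea.

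In (v) your argument is sharper than the paper's. The paper's integer $j$ must also lie in $\{\lceil mc\rceil,\dots,\lfloor md\rfloor\}$, and your caveat that this index set must be nonempty (e.g.\ $m(d-c)\ge 1$) is a genuine correction. For $[c,d]=[0.1,0.2]$ and $m=1$ the sum is empty, so (v) fails as stated ``for all $n\in\mathbb{N}$''.

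Two small points.

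In (v)(a) you rely on strict monotonicity of $\rho$. If ``increasing'' in (P2) is read only as non-decreasing (the paper also calls $\rho$ non-decreasing just before (P1)), use concavity instead. If $\rho(1)=\rho(0)=\tfrac12$, concavity on $[0,\infty)$ would force $\rho\le\tfrac12$ on $[1,\infty)$, contradicting $\rho\to 1$. Hence for $x\in[0,1]$ one gets $\varrho_\rho(x)\ge\tfrac12[\rho(1)-\rho(0)]>0$, and evenness covers $[-1,0]$.

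In (vii), for $|v|\le K$ the valid majorant is $C(|j|-K)^{-1-\beta}$, not $C|j-K|^{-1-\beta}$, because for $j<0$ the quantity $|j-K|$ is not a lower bound for $|v-j|$. The corrected majorant is still summable, so the M-test argument stands.
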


\section{Function-Preserving Neural Network Operators}
In this section, we develop a general class of NNOs designed to preserve a prescribed functional structure.

\noindent
Let $g:\mathcal{D}\to\mathbb{R}$ be a continuous function satisfying
\[ g(v)>0, \quad v\in\mathcal{D}. \]
Since $g$ is continuous and strictly positive on the compact interval $\mathcal{D}$, there exist constants $0<m_g \le M_g < \infty$ such that
\[ 0 < m_g \le g(v) \le M_g, \quad v\in\mathcal{D}. \]

\begin{definition}
For \( f \in C(\mathcal{D}) \) and \( m \in \mathbb{N} \), the \emph{\( g \)-preserving (function-preserving) neural network operator} is defined by
\begin{equation}\label{def-gNNO}
T_m^{(g)}(f;v) = g(v)\,
\frac{\displaystyle\sum\limits_{j=\lceil mc\rceil}^{\lfloor md\rfloor}
\varrho_\rho\big(m a_m(v)-j\big)\,
\dfrac{f\!\left(\tfrac{j}{m}\right)}{g\!\left(\tfrac{j}{m}\right)}}
{\displaystyle\sum\limits_{j=\lceil mc\rceil}^{\lfloor md\rfloor}
\varrho_\rho\big(m a_m(v)-j\big)}, \quad v\in\mathcal{D},
\end{equation}
\end{definition}

\noindent
where, $\{a_m\}_{m\in\mathbb{N}}$ be a sequence of continuous mappings $a_m:\mathcal{D}\to\mathcal{D}$ satisfying
\[ a_m(\mathcal{D}) \subseteq \mathcal{D}, \quad \lim_{m\to\infty} \|a_m - \mathrm{Id}\|_\infty = 0, \quad m\|a_m - \mathrm{Id}\|_\infty \le C_a,
\]
$\mathrm{Id}(v)=v$ and $C_a>0$ is independent of $n$.

\noindent
For convenience, define the $g$-transformation
\[ f_g(v):=\frac{f(v)}{g(v)}, \quad v\in\mathcal{D}. \] Since $g(v)\ge m_g>0$, it follows that $f_g\in C(\mathcal{D})$.
\noindent
Then, the operator \eqref{def-gNNO} can be equivalently written as
\begin{equation}\label{repr-gNNO}
T_m^{(g)}(f;v) = g(v)\,F_m^{(a)}(f_g;v),
\end{equation}
where
\[ F_m^{(a)}(h;v) = \frac{\displaystyle\sum\limits_{j=\lceil mc\rceil}^{\lfloor md\rfloor}
\varrho_\rho\big(m a_m(v)-j\big)\,h\!\left(\tfrac{j}{m}\right)}
{\displaystyle\sum\limits_{j=\lceil mc\rceil}^{\lfloor md\rfloor}
\varrho_\rho\big(m a_m(v)-j\big)},
\quad h\in C(\mathcal{D}). \]
The operator $F_m^{(a)}$ is a positive linear operator with respect to $h$ and satisfies the normalization property
\[F_m^{(a)}(1;v)=1, \quad v\in\mathcal{D}.\]
Now, we present two important special cases corresponding to logarithmic and exponential structure functions.

\medskip
\noindent\textbf{(i) Logarithmic Preservation.}
Let $\mu>0$ and assume that
\[ 1+\mu+v > 1, \quad v\in\mathcal{D},\]
so that $\ln_\mu(v):=\ln(1+\mu+v)$ is well-defined and strictly positive on $\mathcal{D}$. Then $\ln_\mu$ satisfies
\[ 0 < m_{\log} \le \ln_\mu(v) \le M_{\log}, \quad v\in\mathcal{D}. \]
The corresponding operator is given by
\begin{align*}
T_m^{(\log)}(f;v)
& =\ln_\mu(v)\,F_m^{(a)}\!\left(\frac{f}{\ln_\mu};v\right)\\
&= \ln_\mu(v)\,
\frac{\displaystyle\sum\limits_{j=\lceil mc\rceil}^{\lfloor md\rfloor}
\varrho_\rho\big(m a_m(v)-j\big)\,
\dfrac{f\!\left(\tfrac{j}{m}\right)}{\ln_\mu\!\left(\tfrac{j}{m}\right)}}
{\displaystyle\sum\limits_{j=\lceil mc\rceil}^{\lfloor md\rfloor}
\varrho_\rho\big(m a_m(v)-j\big)}.
\end{align*}
Moreover, $T_m^{(\log)}$ reproduces $\ln_\mu$, i.e.,
\[ T_m^{(\log)}(\ln_\mu;v)=\ln_\mu(v), \quad v\in\mathcal{D}. \]

\noindent\textbf{(ii) Exponential Preservation.}
Let $\lambda>0$ and define $g(v)=e^{\lambda v}$. Then $g$ is strictly positive and continuous on $\mathcal{D}$, and satisfies
\[ 0 < m_{\exp} \le e^{\lambda v} \le M_{\exp}, \quad v\in\mathcal{D}.\]
The corresponding operator is
\begin{align*}
T_m^{(\exp)}(f;v)
&= e^{\lambda v}\,F_m^{(a)}\!\left(f(\cdot)e^{-\lambda(\cdot)};v\right)\\
&= e^{\lambda v}\,
\frac{\displaystyle\sum\limits_{j=\lceil mc\rceil}^{\lfloor md\rfloor}
\varrho_\rho\big(m a_m(v)-j\big)\,
\dfrac{f\!\left(\tfrac{j}{m}\right)}{e^{\lambda j/m}}}
{\displaystyle\sum\limits_{j=\lceil mc\rceil}^{\lfloor md\rfloor}
\varrho_\rho\big(m a_m(v)-j\big)}.
\end{align*}
Moreover, $T_m^{(\exp)}$ reproduces $e^{\lambda v}$, i.e.,
\[ T_m^{(\exp)}(e^{\lambda v};v)=e^{\lambda v}, \quad v\in\mathcal{D}. \]
We first show that the proposed operators are bounded and well-defined.

\begin{theorem}
Let $g\in C(\mathcal{D})$ satisfy $g(v)>0$ for all $v\in\mathcal{D}$, and let $f\in C(\mathcal{D})$. Then the operator $T_m^{(g)}(f;v)$ is well-defined for all $v\in\mathcal{D}$. Moreover, 
\[ |T_m^{(g)}(f;v)| \le \frac{M_g}{m_g} \|f\|_\infty, \quad v\in\mathcal{D}.\]
\end{theorem}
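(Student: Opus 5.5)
The plan is to prove well-definedness first and then get the bound from the representation \eqref{repr-gNNO}, $T_m^{(g)}(f;v)=g(v)\,F_m^{(a)}(f_g;v)$.

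\textbf{Well-definedness.} Fix $m\in\mathbb{N}$ and $v\in\mathcal{D}$. Every quantity in \eqref{def-gNNO} must be finite, and the denominator must not vanish.
\begin{itemize}
\item The index set $\{\lceil mc\rceil,\dots,\lfloor md\rfloor\}$ is finite, so both numerator and denominator are finite sums.
\item Each node $j/m$ lies in $\mathcal{D}$, and $g(j/m)\ge m_g>0$, so every quotient $f(j/m)/g(j/m)$ is a finite real number.
\item Since $a_m(\mathcal{D})\subseteq\mathcal{D}$, we have $a_m(v)\in[c,d]$. Lemma~\ref{lemma-psi-properties}(v), applied at the point $a_m(v)$ in place of $v$, gives
\[\sum_{j=\lceil mc\rceil}^{\lfloor md\rfloor}\varrho_\rho\big(m a_m(v)-j\big)\ge c_0>0 .\]
\end{itemize}
So the denominator is strictly positive and $T_m^{(g)}(f;v)$ is a well-defined real number.

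The one point needing care is the justification of (v). It requires an integer $j$ with $|m a_m(v)-j|\le 1$ lying in the index range $[\lceil mc\rceil,\lfloor md\rfloor]$. Since $m a_m(v)\in[mc,md]$, one can take $j=\lfloor m a_m(v)\rfloor$ or $j=\lceil m a_m(v)\rceil$. At least one of these lies in the range provided the interval $[mc,md]$ contains an integer, which holds once $m(d-c)\ge 1$. I will invoke (v) as stated in the lemma and note this implicit requirement on $m$ if needed.

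\textbf{Bound.} The weights
\[w_j(v):=\frac{\varrho_\rho(m a_m(v)-j)}{\sum_{k=\lceil mc\rceil}^{\lfloor md\rfloor}\varrho_\rho(m a_m(v)-k)}\]
are nonnegative by Lemma~\ref{lemma-psi-properties}(i) and sum to $1$. Hence $F_m^{(a)}(f_g;v)$ is a convex combination of the values $f_g(j/m)$, and
\[|F_m^{(a)}(f_g;v)|\le \max_j |f_g(j/m)|\le \|f_g\|_\infty\le \frac{\|f\|_\infty}{m_g}.\]
Multiplying by $|g(v)|\le M_g$ then gives
\[|T_m^{(g)}(f;v)|\le \frac{M_g}{m_g}\|f\|_\infty .\]

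The only potential obstacle is the positivity of the denominator; the bound itself is immediate from positivity and normalization.
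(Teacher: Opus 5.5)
Your proof is correct and follows the paper's argument essentially step for step. You bound the denominator below by $c_0$ using Lemma~\ref{lemma-psi-properties}(v) at the point $a_m(v)\in\mathcal{D}$, and then bound the convex combination via $|f(j/m)/g(j/m)|\le\|f\|_\infty/m_g$ and $g(v)\le M_g$. Your side remark is also valid: property (v) tacitly needs the index range $\{\lceil mc\rceil,\dots,\lfloor md\rfloor\}$ to be nonempty (guaranteed, e.g., once $m(d-c)\ge 1$), and the paper's ``for all $m$'' glosses over this.
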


\begin{proof}
Since $g\in C(\mathcal{D})$ and $g(v)>0$ on the compact interval $\mathcal{D}$, there exist constants
\[ m_g := \min_{v\in\mathcal{D}} g(v) > 0, \quad M_g := \max_{v\in\mathcal{D}} g(v) < \infty.\]

\noindent
\textbf{(i) Well-definedness.}
For each $v\in\mathcal{D}$, we have $a_m(v)\in\mathcal{D}$, and hence $m a_m(v)\in [mc, md]$. Therefore, by property (v) of Lemma~\ref{lemma-psi-properties}, $\exists \ c_0>0$ satisfying
\[ \sum_{j=\lceil mc\rceil}^{\lfloor md\rfloor} \varrho_\rho\big(m a_m(v)-j\big)
\ge c_0, \quad v\in\mathcal{D},\; m\in\mathbb{N}.\]
Thus, the denominator in the definition of $T_m^{(g)}$ is strictly positive and uniformly bounded away from zero.

\noindent
Moreover, since $\tfrac{j}{m}\in\mathcal{D}$ and $g(j/m)\ge m_g>0$ for all $j$, the quantity \[ \frac{f(j/m)}{g(j/m)} \] is well-defined and bounded. Since the sums involved are finite, it follows that $T_m^{(g)}(f;v)$ is well-defined for all $v\in\mathcal{D}$.

\medskip

\noindent\textbf{(ii) Boundedness.}
Define the normalized weights
\[ w_{m,j}(v) := \frac{\varrho_\rho(m a_m(v)-j)}
{\sum\limits_{k=\lceil mc\rceil}^{\lfloor md\rfloor} \varrho_\rho(m a_m(v)-k)}. \]
Then $w_{m,j}(v)\ge 0$ and
\[ \sum_{j=\lceil mc\rceil}^{\lfloor md\rfloor} w_{m,j}(v)=1. \]
Using the representation \eqref{repr-gNNO}, we obtain
\[ |T_m^{(g)}(f;v)| \le g(v)\sum_{j=\lceil mc\rceil}^{\lfloor md\rfloor}
w_{m,j}(v) \left|\frac{f(j/m)}{g(j/m)}\right|. \]
Since $g(j/m)\ge m_g$, it follows that
\[ \left|\frac{f(j/m)}{g(j/m)}\right| \le \frac{\|f\|_\infty}{m_g}. \]
Hence,
\begin{align*}
|T_m^{(g)}(f;v)| &\le g(v)\cdot \frac{\|f\|_\infty}{m_g}
\sum_{j=\lceil mc\rceil}^{\lfloor md\rfloor} w_{m,j}(v) \\
& = \frac{g(v)}{m_g}\,\|f\|_\infty \\
&\le \frac{M_g}{m_g}\,\|f\|_\infty.
\end{align*}
This proves the desired estimate.
\end{proof}

\begin{corollary}
Let $\mu>0$ and assume that \( 1+\mu+v > 1, \ v\in\mathcal{D},
\) so that $\ln_\mu(v)=\ln(1+\mu+v)$ is strictly positive on $\mathcal{D}$. Then the operator $T_m^{(\log)}$ is well-defined and satisfies
\[ |T_m^{(\log)}(f;v)| \le \frac{M_{\log}}{m_{\log}} \|f\|_\infty, \quad v\in\mathcal{D}.\]
\end{corollary}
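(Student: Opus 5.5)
This is a direct specialization of the preceding theorem, with $g=\ln_\mu$. The plan is to check that $\ln_\mu$ satisfies the theorem's hypotheses and then read off the bound.

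\textbf{Step 1: continuity and positivity.} The map $v\mapsto 1+\mu+v$ is continuous on $\mathcal{D}$. By assumption it is strictly greater than $1$ there, and $\ln$ is continuous on $(0,\infty)$. Hence $\ln_\mu\in C(\mathcal{D})$ and $\ln_\mu(v)>\ln 1=0$ for every $v\in\mathcal{D}$.

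\textbf{Step 2: the constants.} On the compact interval $\mathcal{D}=[c,d]$, continuity and strict positivity give
\[
m_{\log}:=\min_{v\in\mathcal{D}}\ln_\mu(v)>0,\qquad M_{\log}:=\max_{v\in\mathcal{D}}\ln_\mu(v)<\infty .
\]
Since $\ln_\mu$ is increasing, these are explicitly $m_{\log}=\ln(1+\mu+c)$ and $M_{\log}=\ln(1+\mu+d)$. They coincide with the constants $m_g$ and $M_g$ of the theorem when $g=\ln_\mu$.

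\textbf{Step 3: apply the theorem.} By construction, $T_m^{(\log)}=T_m^{(g)}$ with $g=\ln_\mu$. The theorem then yields that $T_m^{(\log)}(f;v)$ is well-defined for all $v\in\mathcal{D}$, $f\in C(\mathcal{D})$ and $m\in\mathbb{N}$. It also yields
\[
|T_m^{(\log)}(f;v)|\le \frac{M_{\log}}{m_{\log}}\|f\|_\infty=\frac{\ln(1+\mu+d)}{\ln(1+\mu+c)}\|f\|_\infty .
\]

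There is no real obstacle here. The only point needing care is Step 1: the hypothesis $1+\mu+v>1$ on $\mathcal{D}$ (equivalently $c>-\mu$) is exactly what guarantees that the minimum $m_{\log}$ is strictly positive, so that the ratio $M_{\log}/m_{\log}$ is finite.
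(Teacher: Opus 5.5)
Your proposal is correct and matches the paper, which obtains this corollary with no separate argument, as the direct specialization of the preceding boundedness theorem to $g=\ln_\mu$. Your explicit identification $m_{\log}=\ln(1+\mu+c)$ and $M_{\log}=\ln(1+\mu+d)$, using monotonicity of $\ln_\mu$, is a correct and harmless addition.
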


\begin{corollary}
Let $\lambda>0$ and define $g(v)=e^{\lambda v}$ for $v\in\mathcal{D}$. Then the operator $T_m^{(\exp)}$ is well-defined and satisfies
\[ |T_m^{(\exp)}(f;v)| \le \frac{M_{\exp}}{m_{\exp}} \|f\|_\infty, \quad v\in\mathcal{D}.\]
\end{corollary}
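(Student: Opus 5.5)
This is a direct specialization of the preceding theorem with $g(v)=e^{\lambda v}$, so the proof only has to check that this $g$ meets the theorem's hypotheses and then identify the constants.

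\textbf{Step 1: hypotheses.} The function $v\mapsto e^{\lambda v}$ is continuous on $\mathbb{R}$, hence $g\in C(\mathcal{D})$. It is also strictly positive everywhere, since the exponential never vanishes; the sign of $\lambda$ plays no role here. The preceding theorem therefore applies with this $g$.

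\textbf{Step 2: constants.} For $\lambda>0$, $g$ is strictly increasing on $\mathcal{D}=[c,d]$, so
\[ m_{\exp}=\min_{v\in\mathcal{D}}e^{\lambda v}=e^{\lambda c}, \qquad M_{\exp}=\max_{v\in\mathcal{D}}e^{\lambda v}=e^{\lambda d}. \]
These coincide with the constants $m_g$ and $M_g$ of the theorem.

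\textbf{Step 3: conclusion.} By definition $T_m^{(\exp)}=T_m^{(g)}$ for $g(v)=e^{\lambda v}$. The theorem then gives well-definedness for every $v\in\mathcal{D}$, and
\[ |T_m^{(\exp)}(f;v)|\le \frac{M_{\exp}}{m_{\exp}}\|f\|_\infty = e^{\lambda(d-c)}\|f\|_\infty, \qquad v\in\mathcal{D}. \]

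There is no real obstacle. The only point worth making explicit is that the constants $m_{\exp}$ and $M_{\exp}$ introduced informally in the definition of $T_m^{(\exp)}$ are exactly the minimum and maximum of $g$ used in the theorem. Monotonicity of the exponential makes them explicit.
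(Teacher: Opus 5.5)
Your proposal is correct and takes the same approach as the paper. The paper gives no separate argument for this corollary; it is the immediate specialization of the preceding boundedness theorem to $g(v)=e^{\lambda v}$. Your identification $m_{\exp}=e^{\lambda c}$, $M_{\exp}=e^{\lambda d}$ makes the paper's constant explicit as $e^{\lambda(d-c)}$.
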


\noindent
Now, we establish the positivity, linearity, and preservation properties of the proposed NNOs.

\begin{theorem}
Let $g \in C(\mathcal{D})$ satisfy $g(v)>0$ for all $v \in \mathcal{D}$. Then the operator
\[ T_m^{(g)} : C(\mathcal{D}) \to C(\mathcal{D}) \] is positive, linear, and reproduces the function $g$, i.e.,
\[ T_m^{(g)}(g;v) = g(v), \quad v \in \mathcal{D}. \]
\end{theorem}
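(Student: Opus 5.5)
The argument rests on the representation \eqref{repr-gNNO}, $T_m^{(g)}(f;v)=g(v)\,F_m^{(a)}(f_g;v)$, together with the normalized weights $w_{m,j}(v)$ from the proof of the previous theorem. By that theorem (well-definedness part), the denominator $\sum_{k}\varrho_\rho(m a_m(v)-k)$ is at least $c_0>0$. So we can write
\[
T_m^{(g)}(f;v)=g(v)\sum_{j=\lceil mc\rceil}^{\lfloor md\rfloor} w_{m,j}(v)\,\frac{f(j/m)}{g(j/m)},\qquad w_{m,j}(v)\ge 0,\quad \sum_{j} w_{m,j}(v)=1.
\]
All the claimed properties can then be read off from this finite sum.

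First I will check that $T_m^{(g)}$ really maps $C(\mathcal{D})$ into $C(\mathcal{D})$. Each numerator weight $\varrho_\rho(m a_m(v)-j)$ is continuous in $v$, because $\varrho_\rho$ is Lipschitz by Lemma~\ref{lemma-psi-properties}(viii) and $a_m$ is continuous. The denominator is a finite sum of such functions and is bounded below by $c_0$. Hence each $w_{m,j}$ is continuous, and $T_m^{(g)}(f;\cdot)$ is a product of the continuous function $g$ with a finite linear combination of the $w_{m,j}$ whose coefficients $f(j/m)/g(j/m)$ do not depend on $v$. This is the only step that needs any care, namely continuity of the denominator and its positivity, and the previous theorem already supplies the positivity.

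Linearity follows directly from the displayed formula. For $f_1,f_2\in C(\mathcal{D})$ and $\alpha,\beta\in\mathbb{R}$, the coefficient $(\alpha f_1+\beta f_2)(j/m)/g(j/m)$ splits linearly, and $g(v)$ and the weights do not depend on $f$; so $T_m^{(g)}(\alpha f_1+\beta f_2;v)=\alpha T_m^{(g)}(f_1;v)+\beta T_m^{(g)}(f_2;v)$. For positivity, if $f\ge 0$ on $\mathcal{D}$, then $f(j/m)/g(j/m)\ge 0$ because $j/m\in\mathcal{D}$ and $g>0$. Together with $w_{m,j}(v)\ge 0$ (from Lemma~\ref{lemma-psi-properties}(i)) and $g(v)>0$, this gives $T_m^{(g)}(f;v)\ge 0$.

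For reproduction, substitute $f=g$. Then $f_g\equiv 1$, and
\[
T_m^{(g)}(g;v)=g(v)\,F_m^{(a)}(1;v)=g(v)\sum_j w_{m,j}(v)=g(v),
\]
using the normalization $F_m^{(a)}(1;v)=1$ already noted in the paper. The special cases $g=\ln_\mu$ and $g=e^{\lambda v}$ are immediate instances of this identity.
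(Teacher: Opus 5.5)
Your proposal is correct and takes essentially the same route as the paper. Both arguments rewrite $T_m^{(g)}$ with the normalized weights $w_{m,j}(v)$, whose denominator is bounded below by $c_0$ via Lemma~\ref{lemma-psi-properties}(v); they get continuity of each $w_{m,j}$ from the continuity of $a_m$ and $\varrho_\rho$, and read off positivity, linearity, and reproduction of $g$ from $w_{m,j}\ge 0$ and $\sum_j w_{m,j}(v)=1$.
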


\begin{proof}
Fix $v \in \mathcal{D}$. By property (v) of Lemma~\ref{lemma-psi-properties},  $\exists \ c_0>0$ satisfying \[
\sum_{j=\lceil mc\rceil}^{\lfloor md\rfloor} \varrho_\rho(m a_m(v)-j) \ge c_0, \quad v\in\mathcal{D},\; m\in\mathbb{N}. \]
Therefore, the denominator in \eqref{def-gNNO} is strictly positive and uniformly bounded away from zero.

\noindent
Thus, the operator admits the representation
\begin{equation}\label{repr-weight-clean-final}
T_m^{(g)}(f;v) = g(v)\sum_{j=\lceil mc\rceil}^{\lfloor md\rfloor}
w_{m,j}(v)\, \frac{f\!\left(\tfrac{j}{m}\right)}{g\!\left(\tfrac{j}{m}\right)},
\end{equation}
where
\[ w_{m,j}(v) := \frac{\varrho_\rho(m a_m(v)-j)}
{\sum\limits_{k=\lceil mc\rceil}^{\lfloor md\rfloor} \varrho_\rho(m a_m(v)-k)}. \]
Then $w_{m,j}(v)\ge 0$ and
\[ \sum_{j=\lceil mc\rceil}^{\lfloor md\rfloor} w_{m,j}(v)=1. \] Since $a_m$, $\varrho_\rho$, and $g$ are continuous, and the denominator is bounded away from zero, it follows that each $w_{m,j}(v)$ is continuous in $v$. Hence, as a finite sum of continuous functions,
\[ T_m^{(g)}(f;\cdot)\in C(\mathcal{D}), \quad \forall f\in C(\mathcal{D}),\]
so that $T_m^{(g)}:C(\mathcal{D}) \to C(\mathcal{D})$.

\medskip
\noindent
\textbf{(i) Positivity.}
Let $f \in C(\mathcal{D})$ with $f \ge 0$ on $\mathcal{D}$. Since $g(j/m)>0$ for all $j$, we have
\[ \frac{f\!\left(\tfrac{j}{m}\right)}{g\!\left(\tfrac{j}{m}\right)} \ge 0.\]
Using \eqref{repr-weight-clean-final} and $w_{m,j}(v)\ge 0$, it follows that
\[ T_m^{(g)}(f;v)\ge 0, \quad v\in\mathcal{D}.\]
\textbf{(ii) Linearity.}
Let $f,h \in C(\mathcal{D})$ and $\alpha,\beta \in \mathbb{R}$. Since the weights $w_{m,j}(v)$ are independent of the function, we obtain
\begin{align*}
T_m^{(g)}(\alpha f + \beta h; v)
& =g(v)\sum_{j=\lceil mc\rceil}^{\lfloor md\rfloor} w_{m,j}(v) \frac{\alpha f\!\left(\tfrac{j}{m}\right) + \beta h\!\left(\tfrac{j}{m}\right)}
{g\!\left(\tfrac{j}{m}\right)} \\
& = \alpha T_m^{(g)}(f;v) + \beta T_m^{(g)}(h;v).
\end{align*}
Thus, $T_m^{(g)}$ is linear.

\medskip
\noindent
\textbf{(iii) Reproduction of $g$.}
Taking $f=g$, we obtain
\[ \frac{g\!\left(\tfrac{j}{m}\right)}{g\!\left(\tfrac{j}{m}\right)} = 1, \]
and hence
\[ T_m^{(g)}(g;v) = g(v)\sum_{j=\lceil mc\rceil}^{\lfloor md\rfloor} w_{m,j}(v)
= g(v), \]
since $\sum_{j=\lceil mc\rceil}^{\lfloor md\rfloor} w_{m,j}(v)=1$.
\end{proof}

\begin{corollary}
Let $\mu>0$ and assume that \( 1+\mu+v > 1, \ v\in\mathcal{D},
\) so that $\ln_\mu(v)=\ln(1+\mu+v)$ is strictly positive on $\mathcal{D}$. Then the operator $T_m^{(\log)}$ is positive, linear, and reproduces $\ln_\mu$, i.e.,
\[ T_m^{(\log)}(\ln_\mu;v)=\ln_\mu(v), \quad v\in\mathcal{D}.\]
\end{corollary}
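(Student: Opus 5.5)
This corollary is the special case $g=\ln_\mu$ of the preceding theorem on positivity, linearity and reproduction of $g$, so the plan is simply to check that $\ln_\mu$ meets its hypotheses and then specialize. The only hypotheses of that theorem are $g\in C(\mathcal{D})$ and $g(v)>0$ for all $v\in\mathcal{D}$.

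\textbf{Step 1 (continuity).} By assumption $1+\mu+v>1>0$ on $\mathcal{D}$. The logarithm is continuous on $(0,\infty)$, so $\ln_\mu(v)=\ln(1+\mu+v)$ is continuous on $\mathcal{D}$.

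\textbf{Step 2 (strict positivity).} Since $\ln$ is strictly increasing with $\ln 1=0$, the assumption $1+\mu+v>1$ gives $\ln_\mu(v)>0$ for every $v\in\mathcal{D}$. Compactness of $\mathcal{D}$ then provides the bounds $0<m_{\log}\le \ln_\mu\le M_{\log}$. These bounds are not needed for the present statement, but they match the notation used earlier.

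\textbf{Step 3 (specialization).} With $g=\ln_\mu$, definition \eqref{def-gNNO} is exactly the operator $T_m^{(\log)}$ written out in case (i) of the previous section. The preceding theorem therefore applies and gives three conclusions:
\begin{itemize}
\item $T_m^{(\log)}:C(\mathcal{D})\to C(\mathcal{D})$ is positive;
\item it is linear;
\item it satisfies $T_m^{(\log)}(\ln_\mu;v)=\ln_\mu(v)$ for all $v\in\mathcal{D}$.
\end{itemize}
The last identity comes directly from the weight representation \eqref{repr-weight-clean-final}: each quotient $\ln_\mu(j/m)/\ln_\mu(j/m)$ equals $1$, and the weights $w_{m,j}(v)$ sum to $1$.

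There is no real obstacle here. The only point needing attention is the positivity of $\ln_\mu$ (Step 2), because it keeps the sample quotients $f(j/m)/\ln_\mu(j/m)$ well-defined and keeps them nonnegative whenever $f\ge0$. That is exactly what the hypothesis $1+\mu+v>1$ guarantees.
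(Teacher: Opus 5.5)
Your proposal is correct and takes the same approach as the paper, which obtains this corollary by specializing the preceding theorem (positivity, linearity, reproduction of $g$) to $g=\ln_\mu$. Your check that $\ln_\mu$ is continuous and strictly positive on $\mathcal{D}$ is exactly the hypothesis verification that this specialization needs.
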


\begin{corollary}
Let $\lambda>0$ and define $g(v)=e^{\lambda v}$ for $v\in\mathcal{D}$. Then the operator $T_m^{(\exp)}$ is positive, linear, and reproduces $e^{\lambda v}$, i.e.,
\[ T_m^{(\exp)}(e^{\lambda v};v)=e^{\lambda v}, \quad v\in\mathcal{D}.\]
\end{corollary}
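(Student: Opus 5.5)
This corollary is the special case $g=e^{\lambda v}$ of the preceding theorem on positivity, linearity and reproduction of $g$. The plan is simply to check that this $g$ meets the theorem's hypotheses and then read off the conclusions.

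First I would observe that $g(v)=e^{\lambda v}$ is continuous on $\mathbb{R}$, hence on the compact interval $\mathcal{D}=[c,d]$. It is also strictly positive there, since the exponential never vanishes. So $g\in C(\mathcal{D})$ and $g(v)>0$ for all $v\in\mathcal{D}$. Its extremal values are attained at the endpoints ($\lambda>0$ makes $g$ increasing), giving $m_{\exp}=e^{\lambda c}$ and $M_{\exp}=e^{\lambda d}$, although only positivity is actually needed here.

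Next I would note that with this choice the operator $T_m^{(g)}$ from \eqref{def-gNNO} coincides term by term with $T_m^{(\exp)}$, because $g(j/m)=e^{\lambda j/m}$. Applying the theorem then gives three conclusions at once: $T_m^{(\exp)}:C(\mathcal{D})\to C(\mathcal{D})$ is positive, it is linear, and it satisfies $T_m^{(\exp)}(e^{\lambda\cdot};v)=e^{\lambda v}$ on $\mathcal{D}$. If one wants to see the reproduction directly, the ratios $e^{\lambda j/m}/e^{\lambda j/m}$ equal $1$, and the normalized weights $w_{m,j}(v)$ sum to $1$ by Lemma~\ref{lemma-psi-properties}(v).

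There is no genuine obstacle. The only point needing care is identifying $T_m^{(\exp)}$ exactly with $T_m^{(g)}$ for $g=e^{\lambda\cdot}$, including the same sequence $a_m$. Once that is done, every conclusion is inherited from the theorem.
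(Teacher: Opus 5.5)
Your proposal is correct and matches the paper, which obtains this corollary directly by specializing the preceding theorem to $g(v)=e^{\lambda v}$, a continuous and strictly positive function on $\mathcal{D}$. One small precision: the weights $w_{m,j}(v)$ sum to $1$ by construction, and Lemma~\ref{lemma-psi-properties}(v) is only needed to guarantee that the denominator is positive, so that the normalization is well defined.
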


\begin{remark}
In general, the operator $T_m^{(g)}$ does not reproduce constant functions. Indeed,
\[ T_m^{(g)}(1;v) = g(v)\,F_m^{(a)}\!\left(\frac{1}{g};v\right), \]
which is not identically equal to $1$ unless $g$ is constant. This reflects the function-preserving nature of the operator, which is designed to reproduce $g$ rather than constant functions.
\end{remark}

\noindent
Now, we establish the uniform convergence of the proposed operators.
\begin{theorem}
Let $g \in C(\mathcal{D})$ satisfy $g(v)>0$ for all $v \in \mathcal{D}$, and let $f \in C(\mathcal{D})$. Then
\[ \lim_{m\to\infty} \|T_m^{(g)}(f) - f\|_\infty = 0. \]
\end{theorem}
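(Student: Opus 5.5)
We reduce the statement to the uniform convergence of the auxiliary operator $F_m^{(a)}$ by means of the representation \eqref{repr-gNNO}. Set $h:=f_g=f/g\in C(\mathcal{D})$. Since $f(v)=g(v)h(v)$, we have
\[
T_m^{(g)}(f;v)-f(v)=g(v)\bigl[F_m^{(a)}(h;v)-h(v)\bigr],
\]
and therefore $\|T_m^{(g)}(f)-f\|_\infty\le M_g\,\|F_m^{(a)}(h)-h\|_\infty$. It thus suffices to show $\|F_m^{(a)}(h)-h\|_\infty\to0$ for every $h\in C(\mathcal{D})$.

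To estimate $F_m^{(a)}(h;v)-h(v)$, fix $v\in\mathcal{D}$ and write $u:=a_m(v)\in\mathcal{D}$. We split
\[
F_m^{(a)}(h;v)-h(v)=\bigl[F_m^{(a)}(h;v)-h(u)\bigr]+\bigl[h(a_m(v))-h(v)\bigr].
\]
The second bracket is bounded by $\omega(h,\|a_m-\mathrm{Id}\|_\infty)$, where $\omega$ denotes the modulus of continuity. Because $h$ is uniformly continuous on the compact set $\mathcal{D}$ and $\|a_m-\mathrm{Id}\|_\infty\to0$, this term tends to zero uniformly in $v$.

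The first bracket is the classical Costarelli--Spigler operator $F_m$ of \eqref{clasicalLNNO1} evaluated at the point $u\in\mathcal{D}$; that is, $F_m^{(a)}(h;v)=F_m(h;u)$. Hence it is bounded by $\sup_{u\in\mathcal{D}}|F_m(h;u)-h(u)|$, and the problem reduces to the known uniform convergence of $F_m$. To keep the argument self-contained, we prove this directly. Using the normalization $\sum_j w_{m,j}=1$, we have
\[
|F_m(h;u)-h(u)|\le\frac{1}{c_0}\sum_{j=\lceil mc\rceil}^{\lfloor md\rfloor}\varrho_\rho(mu-j)\,\bigl|h(j/m)-h(u)\bigr|,
\]
where the lower bound $c_0$ on the denominator comes from Lemma~\ref{lemma-psi-properties}(v). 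Given $\varepsilon>0$, choose $\delta>0$ from the uniform continuity of $h$. We split the sum into the indices with $|mu-j|\le m\delta$ and those with $|mu-j|>m\delta$:
\begin{itemize}
\item For $|mu-j|\le m\delta$, each difference $|h(j/m)-h(u)|$ is less than $\varepsilon$, and the weights sum to at most $1$ by Lemma~\ref{lemma-psi-properties}(iii). This part contributes at most $\varepsilon$.
\item For $|mu-j|>m\delta$, we bound each difference by $2\|h\|_\infty$. The remaining sum of weights tends to zero uniformly in $u$ by the localization property, Lemma~\ref{lemma-psi-properties}(iv).
\end{itemize}
Combining the two parts gives $\limsup_m\sup_u|F_m(h;u)-h(u)|\le\varepsilon/c_0$. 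Since $\varepsilon$ is arbitrary, the first bracket tends to zero uniformly.

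The main obstacle is ensuring that every bound is uniform in $v$, despite the moving evaluation point $a_m(v)$. This is handled by the facts that $a_m(v)\in\mathcal{D}$, that the localization in (iv) holds uniformly over all of $\mathbb{R}$, and that the constant $c_0$ is independent of $m$ and of the point. The hypothesis $m\|a_m-\mathrm{Id}\|_\infty\le C_a$ is not needed here; only $\|a_m-\mathrm{Id}\|_\infty\to0$ is used. The stronger hypothesis would matter only for the quantitative rates stated later in terms of the modulus of continuity.
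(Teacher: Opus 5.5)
Your proof is correct, but it handles the deformation $a_m$ differently from the paper. After the same reduction $\|T_m^{(g)}(f)-f\|_\infty\le M_g\|F_m^{(a)}(f_g)-f_g\|_\infty$, the paper works directly at the point $v$. It splits the sum according to $|j/m-v|<\delta$ or $|j/m-v|\ge\delta$, and bounds the near part by $\varepsilon$ using the normalized weights. It absorbs the deformation into the far part: choosing $m$ with $\|a_m-\mathrm{Id}\|_\infty<\delta/2$ turns $|j/m-v|\ge\delta$ into $|j-ma_m(v)|\ge m\delta/2$, so localization (iv) applies at the moving point $ma_m(v)$.

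You instead insert $h(a_m(v))$ and separate the two effects:
\[
\|F_m^{(a)}(h)-h\|_\infty\le\sup_{u\in\mathcal{D}}|F_m(h;u)-h(u)|+\omega\bigl(h,\|a_m-\mathrm{Id}\|_\infty\bigr).
\]
Here the deformation is handled purely by uniform continuity of $h$, and the remaining term is exactly the classical Costarelli--Spigler convergence, which could even be cited as a black box.

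Your route is more modular. It shows that any convergence rate for $F_m$ transfers to $T_m^{(g)}$ at an additive cost of $M_g\,\omega(f_g,\|a_m-\mathrm{Id}\|_\infty)$. Under $m\|a_m-\mathrm{Id}\|_\infty\le C_a$, this cost is at most $M_g(1+C_a)\,\omega(f_g,1/m)$, which matches the structure of the paper's later quantitative estimate. The paper's route is a single $\varepsilon$--$\delta$ argument, and its near-part bound is slightly tighter ($\varepsilon$ rather than your $\varepsilon/c_0$, since you passed to unnormalized weights). Both proofs use the same ingredients: Lemma~\ref{lemma-psi-properties}(iii)--(v), $a_m(v)\in\mathcal{D}$, and uniform continuity of $f_g$. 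You are right that only $\|a_m-\mathrm{Id}\|_\infty\to0$ is needed here.
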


\begin{proof}
Let $f \in C(\mathcal{D})$ and define
\[ f_g(v) := \frac{f(v)}{g(v)}, \quad v \in \mathcal{D}. \] Since $g$ is continuous and strictly positive on the compact interval $\mathcal{D}$, there exist constants $m_g, M_g>0$ such that
\[ 0 < m_g \le g(v) \le M_g, \quad v\in\mathcal{D}. \]
Hence, $f_g \in C(\mathcal{D})$.

\noindent
Using the representation \eqref{repr-gNNO}, we obtain
\begin{align*}
|T_m^{(g)}(f;v) - f(v)|
& = \left| g(v)\,F_m^{(a)}(f_g;v) - g(v)\,f_g(v) \right| \\
& = g(v)\,\left| F_m^{(a)}(f_g;v) - f_g(v) \right| \\
&\le M_g\,\left| F_m^{(a)}(f_g;v) - f_g(v) \right|.
\end{align*}
Taking the supremum over $v\in\mathcal{D}$, we obtain
\begin{equation}\label{eq-final-sup}
\|T_m^{(g)}(f) - f\|_\infty
\le M_g\,\|F_m^{(a)}(f_g) - f_g\|_\infty.
\end{equation}
Thus, it suffices to prove that
\[ \|F_m^{(a)}(f_g) - f_g\|_\infty \to 0 \quad \text{as } m\to\infty. \]
Fix $\varepsilon > 0$. Since $f_g$ is continuous on the compact set $\mathcal{D}$, it is uniformly continuous. Hence, there exists $\delta>0$ such that
\[ |u - v| < \delta \;\Rightarrow\; |f_g(u) - f_g(v)| < \varepsilon,
\quad u,v \in \mathcal{D}.\]
For $v \in \mathcal{D}$, using the weight representation of $F_n^{(a)}$, we write
\[F_m^{(a)}(f_g;v) - f_g(v)
= \sum_{j=\lceil mc\rceil}^{\lfloor md\rfloor} w_{m,j}(v)\big[f_g(j/m) - f_g(v)\big],
\]
where $w_{m,j}(v)\ge 0$ and $\sum_{j=\lceil mc\rceil}^{\lfloor md\rfloor} w_{m,j}(v)=1$.

\noindent
Now, we split the sum into two parts:
\[ \sum_{|j/m - v| < \delta} + \sum_{|j/m - v| \ge \delta}.\]
Then,
\begin{align*}
|F_m^{(a)}(f_g;v)-f_g(v)|
&\le \sum_{|j/m-v|<\delta} w_{m,j}(v)\,|f_g(j/m)-f_g(v)|\\
& \qquad+ \sum_{|j/m-v|\ge \delta} w_{m,j}(v)\,|f_g(j/m)-f_g(v)| \\
&\le \varepsilon + 2\|f_g\|_\infty \sum_{|j/m-v|\ge \delta} w_{m,j}(v).
\end{align*}
Since $a_m \to \mathrm{Id}$ uniformly on $\mathcal{D}$, there exists $N_1$ such that for all $m\ge N_1$,
\[ \sup_{v\in\mathcal{D}} |a_m(v)-v| < \frac{\delta}{2}. \]
Hence, for $m\ge N_1$,
\[ |j/m - v| \ge \delta \;\Rightarrow\; |j/m - a_m(v)| \ge \frac{\delta}{2},\]
and therefore
\[ |j - m a_m(v)| \ge \frac{m\delta}{2}. \]
Thus,
\[ \sum_{|j/m - v| \ge \delta} w_{m,j}(v)
\le \frac{\sum\limits_{|j - m a_m(v)| \ge \frac{m\delta}{2}} \varrho_\rho(m a_m(v)-j)} {\sum\limits_{k=\lceil mc\rceil}^{\lfloor md\rfloor} \varrho_\rho(m a_m(v)-k)}.\]
By property (iv) of Lemma~\ref{lemma-psi-properties}, the numerator tends to zero uniformly with respect to $v\in\mathcal{D}$ as $m\to\infty$. Moreover, by property (v), the denominator is uniformly bounded below by a positive constant $c_0>0$. Hence, there exists $N_2$ such that for all $m\ge N_2$,
\[ \sup_{v\in\mathcal{D}} \sum_{|j/m - v| \ge \delta} w_{m,j}(v) < \frac{\varepsilon}{2\|f_g\|_\infty + 1}.\]
Therefore, for $m \ge \max\{N_1,N_2\}$,
\[\sup_{v\in\mathcal{D}} |F_m^{(a)}(f_g;v)-f_g(v)| \le \varepsilon + \varepsilon = 2\varepsilon.\]
Since $\varepsilon>0$ is arbitrary, we conclude that
\[ \|F_m^{(a)}(f_g) - f_g\|_\infty \to 0. \] Finally, using \eqref{eq-final-sup}, we obtain
\[ \|T_m^{(g)}(f) - f\|_\infty \to 0, \]
which completes the proof.
\end{proof}

\begin{corollary}
Let $\mu>0$ and assume that \( 1+\mu+v > 1, \ v\in\mathcal{D}, \) so that $\ln_\mu(v)=\ln(1+\mu+v)$ is strictly positive on $\mathcal{D}$. Then
\[ \lim_{m\to\infty} \|T_m^{(\log)}(f) - f\|_\infty = 0,
\] for all $f \in C(\mathcal{D})$.
\end{corollary}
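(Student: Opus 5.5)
This corollary is a direct specialization of the preceding uniform convergence theorem, taking $g=\ln_\mu$. The plan is to check that $\ln_\mu$ meets the theorem's hypotheses, namely $\ln_\mu\in C(\mathcal{D})$ and $\ln_\mu(v)>0$ for all $v\in\mathcal{D}$. Once that holds, the conclusion $\lim_{m\to\infty}\|T_m^{(\log)}(f)-f\|_\infty=0$ follows, because $T_m^{(\log)}$ is by definition exactly $T_m^{(g)}$ with $g=\ln_\mu$.

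\textbf{Continuity.} The map $v\mapsto 1+\mu+v$ is affine. By assumption it takes values in $(1,\infty)$ on $\mathcal{D}$, where $\ln$ is continuous. Hence $\ln_\mu$ is continuous on $\mathcal{D}$.

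\textbf{Strict positivity.} Since $1+\mu+v>1$, we get $\ln(1+\mu+v)>0$. More explicitly, $\ln_\mu$ is increasing, so
\[
m_{\log}=\ln(1+\mu+c)>0 \quad\text{and}\quad M_{\log}=\ln(1+\mu+d)<\infty.
\]
This gives the two-sided bound $0<m_{\log}\le \ln_\mu\le M_{\log}$ required in the setup.

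\textbf{Conclusion and quantitative remark.} With these checks in place, the theorem applies verbatim. Its proof also gives the quantitative reduction
\[
\|T_m^{(\log)}(f)-f\|_\infty\le M_{\log}\,\bigl\|F_m^{(a)}(f/\ln_\mu)-f/\ln_\mu\bigr\|_\infty ,
\]
and the right-hand side tends to $0$ by the localization property (iv) and the lower bound (v) of Lemma~\ref{lemma-psi-properties}, combined with $\|a_m-\mathrm{Id}\|_\infty\to0$.

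There is no real obstacle. The only point needing care is the domain condition: the hypothesis $1+\mu+v>1$ on $\mathcal{D}$ amounts to $c>-\mu$. This is what makes $\ln_\mu$ both well-defined and bounded away from zero on $\mathcal{D}$.
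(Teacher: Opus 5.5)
Your proposal is correct and follows the paper's route: the paper gives this corollary no separate proof, treating it as the case $g=\ln_\mu$ of the preceding uniform convergence theorem. Your check that $\ln_\mu$ is continuous on $\mathcal{D}$ with $\ln_\mu\ge \ln(1+\mu+c)>0$ is exactly the hypothesis verification this requires, and your remark that the assumption amounts to $c>-\mu$ is accurate.
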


\begin{corollary}
Let $\lambda>0$ and define $g(v)=e^{\lambda v}$ for $v\in\mathcal{D}$. Then
\[ \lim_{m\to\infty} \|T_m^{(\exp)}(f) - f\|_\infty = 0,
\] for all $f \in C(\mathcal{D})$.
\end{corollary}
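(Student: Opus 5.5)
The plan is to take $g(v)=e^{\lambda v}$, $\lambda>0$, in the general uniform convergence theorem just proved, so the corollary follows by checking that theorem's hypotheses. That theorem needs only $g\in C(\mathcal{D})$ and $g(v)>0$ on $\mathcal{D}$.

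First, I will note that $v\mapsto e^{\lambda v}$ is continuous on $\mathbb{R}$, hence on the compact interval $\mathcal{D}=[c,d]$. It is also strictly positive everywhere. Because it is increasing, the bounds in the exponential case are explicit:
\[ m_{\exp}=e^{\lambda c}>0, \qquad M_{\exp}=e^{\lambda d}<\infty. \]

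With $g=e^{\lambda\,\cdot}$ we have $T_m^{(g)}=T_m^{(\exp)}$ by definition. The estimate \eqref{eq-final-sup} then reads
\[ \|T_m^{(\exp)}(f)-f\|_\infty\le e^{\lambda d}\,\|F_m^{(a)}(f e^{-\lambda\cdot})-f e^{-\lambda\cdot}\|_\infty. \]
The right-hand side tends to zero by the argument already given, applied to the continuous function $f(v)e^{-\lambda v}$.

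There is no real obstacle here. The only point to confirm is that the standing assumptions on the sequence $\{a_m\}$ and on $\rho$ are unchanged, which they are, since neither depends on $g$.
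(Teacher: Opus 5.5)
Your proposal is correct and matches the paper, which gives no separate proof because the corollary is just the general uniform convergence theorem with $g(v)=e^{\lambda v}$. That theorem needs only continuity and strict positivity of $g$ on $\mathcal{D}$, which you verify (with $M_{\exp}=e^{\lambda d}$ and $f_g=f e^{-\lambda\cdot}$), so nothing more is needed.
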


\medskip
Now, we establish quantitative error estimates in terms of the modulus of continuity.

\noindent
For $f\in C(\mathcal{D})$, the \emph{modulus of continuity} is defined by
\[ \omega(f,\delta) := \sup_{\substack{v_1,v_2\in\mathcal{D}\\ |v_1-v_2|\le \delta}} |f(v_1)-f(v_2)|, \quad \delta>0.\]

\begin{theorem}
Let $g \in C(\mathcal{D})$ satisfy $g(v)>0$ for all $v \in \mathcal{D}$, and let $f \in C(\mathcal{D})$. Then, for all $v\in\mathcal{D}$ and $m\in\mathbb{N}$,
\[ |T_m^{(g)}(f;v) - f(v)| \le M_g \left(\frac{\mathcal{M}_1(\varrho_\rho)}{c_0} + C_a + 1\right)\, \omega\!\left(f_g, \tfrac{1}{m}\right),\]
where $f_g(v) := \dfrac{f(v)}{g(v)}$.
\end{theorem}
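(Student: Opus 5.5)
We reduce to the auxiliary operator $F_m^{(a)}$ exactly as in the proof of uniform convergence. By the representation \eqref{repr-gNNO} and $f=g\,f_g$,
\[
|T_m^{(g)}(f;v)-f(v)| = g(v)\,\bigl|F_m^{(a)}(f_g;v)-f_g(v)\bigr| \le M_g\,\bigl|F_m^{(a)}(f_g;v)-f_g(v)\bigr|.
\]
It therefore suffices to show
\[
\bigl|F_m^{(a)}(f_g;v)-f_g(v)\bigr|\le \Bigl(\tfrac{\mathcal{M}_1(\varrho_\rho)}{c_0}+C_a+1\Bigr)\omega\bigl(f_g,\tfrac1m\bigr).
\]
Since $F_m^{(a)}(1;v)=1$, we write $F_m^{(a)}(f_g;v)-f_g(v)=\sum_j w_{m,j}(v)\,[f_g(j/m)-f_g(v)]$ with nonnegative weights summing to one.

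The key tool is the standard subadditivity estimate for the modulus of continuity,
\[
\omega(h,\lambda\delta)\le(1+\lambda)\,\omega(h,\delta), \qquad \lambda>0,
\]
which on the interval $\mathcal{D}$ follows by subdividing a segment into $\lceil\lambda\rceil$ pieces. With $\delta=1/m$ this gives
\[
|f_g(j/m)-f_g(v)|\le \omega\bigl(f_g,|j/m-v|\bigr)\le \bigl(1+m|j/m-v|\bigr)\,\omega(f_g,1/m).
\]
We then split the distance through $a_m(v)$:
\[
m|j/m-v|\le |j-ma_m(v)|+m|a_m(v)-v|\le |j-ma_m(v)|+C_a,
\]
using the hypothesis $m\|a_m-\mathrm{Id}\|_\infty\le C_a$. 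Summing against the weights yields
\[
\bigl|F_m^{(a)}(f_g;v)-f_g(v)\bigr|\le\Bigl(1+C_a+\sum_j w_{m,j}(v)\,|ma_m(v)-j|\Bigr)\omega(f_g,1/m).
\]

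The remaining step is to bound $\sum_j w_{m,j}(v)\,|ma_m(v)-j|$ by $\mathcal{M}_1(\varrho_\rho)/c_0$.
\begin{itemize}
\item The denominator of $w_{m,j}$ is at least $c_0$ by Lemma~\ref{lemma-psi-properties}(v), applied at the point $a_m(v)\in\mathcal{D}$ as in the well-definedness proof.
\item The numerator $\sum_{j=\lceil mc\rceil}^{\lfloor md\rfloor}\varrho_\rho(ma_m(v)-j)\,|ma_m(v)-j|$ is a partial sum of a nonnegative series. It is therefore at most $\sup_{x}\sum_{j\in\mathbb{Z}}\varrho_\rho(x-j)|x-j|=\mathcal{M}_1(\varrho_\rho)$, which is finite under (P3) since $\beta>1$.
\end{itemize}
Combining these gives the constant $\mathcal{M}_1(\varrho_\rho)/c_0+C_a+1$, and multiplying by $M_g$ gives the statement.

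The main technical point is the inequality $\omega(h,\lambda\delta)\le(1+\lambda)\omega(h,\delta)$. It must be applied with the real parameter $\lambda=m|j/m-v|$, possibly non-integer, and the shift $a_m(v)\ne v$ must be absorbed cleanly via $C_a$. Everything else is bookkeeping with the normalized weights.
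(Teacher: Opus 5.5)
Your proposal is correct and follows the paper's proof essentially step for step: the reduction $|T_m^{(g)}(f;v)-f(v)|\le M_g|F_m^{(a)}(f_g;v)-f_g(v)|$, the weight representation, and the estimate $\omega(h,s)\le(1+s/\delta)\,\omega(h,\delta)$ with $\delta=1/m$, followed by the splitting $m|j/m-v|\le|j-ma_m(v)|+C_a$ and the bound $\mathcal{M}_1(\varrho_\rho)/c_0$ on the first moment via Lemma~\ref{lemma-psi-properties}(v). The only caveat, which you share with the paper, is that the lower bound $c_0$ (and indeed the definition of the operator itself) needs the index set $\{\lceil mc\rceil,\dots,\lfloor md\rfloor\}$ to be nonempty; this holds when $m(d-c)\ge 1$, but not for every $m\in\mathbb{N}$.
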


\begin{proof}
Let $f \in C(\mathcal{D})$ and define
\[c f_g(v) := \frac{f(v)}{g(v)}, \quad v \in \mathcal{D}.\]
Since $g$ is continuous and strictly positive on the compact interval $\mathcal{D}$, there exist constants $m_g, M_g>0$ such that
\[ 0 < m_g \le g(v) \le M_g, \quad v \in \mathcal{D}.\]
Hence, $f_g \in C(\mathcal{D})$.

\noindent
Using the representation \eqref{repr-gNNO}, we write
\begin{align}
|T_m^{(g)}(f;v) - f(v)|
& = g(v)\,\big|F_m^{(a)}(f_g;v) - f_g(v)\big| \nonumber \\
&\le M_g\,\big|F_m^{(a)}(f_g;v) - f_g(v)\big|. 
\label{eq-reduction-final}
\end{align}
Using the weight representation of $F_m^{(a)}$, we have
\[ F_m^{(a)}(f_g;v) - f_g(v) = \sum_{j=\lceil mc\rceil}^{\lfloor md\rfloor}
w_{m,j}(v)\, \big[f_g(j/m) - f_g(v)\big].\]
By the definition of the modulus of continuity,
\[ \left|f_g\!\left(\tfrac{j}{m}\right) - f_g(v)\right| \le \omega\!\left(f_g, \left|\tfrac{j}{m} - v\right|\right).\]
Now, using the standard inequality for the modulus of continuity: for all $s,\delta>0$,
\[ \omega(f,s) \le \left(1 + \frac{s}{\delta}\right)\omega(f,\delta). \]
Taking $\delta = \tfrac{1}{m}$, we obtain
\[
\omega\!\left(f_g, \left|\tfrac{j}{m} - v\right|\right) \le \left(1 + m\left|\tfrac{j}{m} - v\right|\right) \omega\!\left(f_g, \tfrac{1}{m}\right).\]
Therefore,
\begin{align*}
|F_m^{(a)}(f_g;v) - f_g(v)|
&\le \omega\!\left(f_g, \tfrac{1}{m}\right) \left( 1 + m \sum_{j=\lceil mc\rceil}^{\lfloor md\rfloor} w_{m,j}(v)\left|\tfrac{j}{m} - v\right|
\right).
\end{align*}
We now estimate the first absolute moment. Using
\[ \left|\tfrac{j}{m} - v\right| \le \left|\tfrac{j}{m} - a_m(v)\right| + |a_m(v) - v|,\]
we obtain
\[ m \sum_{j=\lceil mc\rceil}^{\lfloor md\rfloor} w_{m,j}(v)\left|\tfrac{j}{m} - v\right| \le I_1 + I_2,\]
where
\[ I_1 := \frac{\sum\limits_{j=\lceil mc\rceil}^{\lfloor md\rfloor}
\varrho_\rho(m a_m(v)-j)\,|j - m a_m(v)|} {\sum\limits_{k=\lceil mc\rceil}^{\lfloor md\rfloor} \varrho_\rho(m a_m(v)-k)}, \quad I_2 := m|a_m(v)-v|.
\]
By the definition of $\mathcal{M}_1(\varrho_\rho)$, we obtain
\[ \sum_{j=\lceil mc\rceil}^{\lfloor md\rfloor} \varrho_\rho(m a_m(v)-j)\,|j - m a_m(v)| \le \mathcal{M}_1(\varrho_\rho).\]
Moreover, by the property (v) of Lemma~\ref{lemma-psi-properties}, the denominator is bounded below by $c_0>0$. Hence,
\[ I_1 \le \frac{\mathcal{M}_1(\varrho_\rho)}{c_0}. \]
By assumption $m\|a_m - \mathrm{Id}\|_\infty \le C_a$, we have
\[ I_2 = m|a_m(v)-v| \le C_a. \] Thus,
\[m \sum_{j=\lceil mc\rceil}^{\lfloor md\rfloor} w_{m,j}(v)\left|\tfrac{j}{m} - v\right|
\le \frac{\mathcal{M}_1(\varrho_\rho)}{c_0} + C_a.\]
Therefore,
\[ |F_m^{(a)}(f_g;v) - f_g(v)| \le \left(\frac{\mathcal{M}_1(\varrho_\rho)}{c_0} + C_a + 1\right) \omega\!\left(f_g, \tfrac{1}{m}\right).\]
Combining with \eqref{eq-reduction-final}, we obtain
\[ |T_m^{(g)}(f;v) - f(v)| \le M_g \left(\frac{\mathcal{M}_1(\varrho_\rho)}{c_0} + C_a + 1\right) \omega\!\left(f_g, \tfrac{1}{m}\right),\]
which completes the proof.
\end{proof}

\begin{corollary}
Let $\mu>0$ and assume that \( 1+\mu+v > 1, \ v\in\mathcal{D},
\) so that $\ln_\mu(v)=\ln(1+\mu+v)$ is strictly positive on $\mathcal{D}$. Define
\[ f_\mu(v) := \frac{f(v)}{\ln_\mu(v)}. \] Then
\[ \left|T_m^{(\log)}(f;v) - f(v)\right| \le M_{\log} \left(\frac{\mathcal{M}_1(\varrho_\rho)}{c_0} + C_a + 1\right) \omega\!\left(f_\mu, \tfrac{1}{m}\right), \ v \in \mathcal{D}.\]
\end{corollary}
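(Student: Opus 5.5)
This corollary is the special case $g=\ln_\mu$ of the preceding quantitative theorem, so the plan is to verify that $\ln_\mu$ meets that theorem's hypotheses and then substitute. First, I check that $\ln_\mu\in C(\mathcal{D})$ is strictly positive. Since $1+\mu+v>1$ on $\mathcal{D}$, the argument of the logarithm exceeds $1$, so $\ln_\mu(v)>0$. Continuity of $\ln$ on $(0,\infty)$ gives $\ln_\mu\in C(\mathcal{D})$. Because $\ln_\mu$ is increasing, its extrema on $\mathcal{D}=[c,d]$ are attained at the endpoints:
\[ m_{\log}=\ln(1+\mu+c)>0,\qquad M_{\log}=\ln(1+\mu+d). \]
These are exactly the constants $m_g$ and $M_g$ for $g=\ln_\mu$.

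Next, I identify the objects. With $g=\ln_\mu$, the operator $T_m^{(g)}$ is by definition $T_m^{(\log)}$, and the $g$-transformation $f_g=f/g$ equals $f_\mu=f/\ln_\mu$. Since $\ln_\mu\ge m_{\log}>0$, we have $f_\mu\in C(\mathcal{D})$, so $\omega(f_\mu,\cdot)$ is well defined and tends to $0$.

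Finally, I apply the modulus-of-continuity theorem verbatim. Its constant $\frac{\mathcal{M}_1(\varrho_\rho)}{c_0}+C_a+1$ does not depend on $g$, because $\mathcal{M}_1(\varrho_\rho)<\infty$ by (P3), $c_0$ comes from property (v) of Lemma~\ref{lemma-psi-properties}, and $C_a$ comes from the assumption on $\{a_m\}$. Replacing $M_g$ by $M_{\log}$ then yields the stated inequality for all $v\in\mathcal{D}$ and $m\in\mathbb{N}$.

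There is no genuine obstacle. The only point needing care is confirming strict positivity and continuity of $\ln_\mu$ on the compact interval, which the hypothesis $1+\mu+v>1$ provides directly.
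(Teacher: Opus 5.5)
Your proposal is correct and follows the paper's route: the corollary is the specialization $g=\ln_\mu$ of the preceding modulus-of-continuity theorem. Under it, $f_g=f_\mu$, $M_g$ becomes $M_{\log}$, and the $g$-independent constant $\frac{\mathcal{M}_1(\varrho_\rho)}{c_0}+C_a+1$ carries over unchanged. Your explicit identification $m_{\log}=\ln(1+\mu+c)$ and $M_{\log}=\ln(1+\mu+d)$, obtained from monotonicity, is a harmless concrete addition to what the paper leaves implicit.
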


\begin{corollary}
Let $\lambda>0$ and define \( f_\lambda(v) := \frac{f(v)}{e^{\lambda v}}, \ v \in \mathcal{D}. \) Then
\[ \left|T_m^{(\exp)}(f;v) - f(v)\right| \le
M_{\exp} \left(\frac{\mathcal{M}_1(\varrho_\rho)}{c_0} + C_a + 1\right)
\omega\!\left(f_\lambda, \tfrac{1}{m}\right), \ v \in \mathcal{D}.
\]
\end{corollary}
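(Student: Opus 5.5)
This corollary follows directly from the preceding quantitative theorem, specialized to the exponential structure function $g(v)=e^{\lambda v}$. The plan has three steps.

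\textbf{Step 1: check the hypotheses.} I would first verify that this $g$ meets the hypotheses of that theorem. It is continuous on the compact interval $\mathcal{D}=[c,d]$ and strictly positive. Since $\lambda>0$, it is increasing, so we may take the explicit bounds
\[
m_{\exp}=e^{\lambda c}, \qquad M_{\exp}=e^{\lambda d}.
\]
The $g$-transformation is then exactly
\[
f_g(v)=\frac{f(v)}{e^{\lambda v}}=f_\lambda(v),
\]
and it lies in $C(\mathcal{D})$ because $e^{\lambda v}\ge m_{\exp}>0$.

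\textbf{Step 2: apply the theorem.} Next I would invoke the quantitative theorem with $g=e^{\lambda\cdot}$. By the representation \eqref{repr-gNNO}, $T_m^{(\exp)}$ coincides with $T_m^{(g)}$ for this choice of $g$, since
\[
T_m^{(\exp)}(f;v)=e^{\lambda v}\,F_m^{(a)}(f_\lambda;v).
\]
Substituting $M_g=M_{\exp}$ and $f_g=f_\lambda$ into the estimate gives the claimed bound
\[
\left|T_m^{(\exp)}(f;v)-f(v)\right|\le M_{\exp}\left(\frac{\mathcal{M}_1(\varrho_\rho)}{c_0}+C_a+1\right)\omega\!\left(f_\lambda,\tfrac1m\right).
\]
The constants $\mathcal{M}_1(\varrho_\rho)$, $c_0$ and $C_a$ depend only on $\rho$ and the sequence $\{a_m\}$, not on $g$, so they carry over unchanged.

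\textbf{Step 3: the only point needing care.} The one thing to confirm is that the hypotheses behind those constants do not involve $g$. These are finiteness of $\mathcal{M}_1(\varrho_\rho)$, which uses $\beta>1$ in (P3), the lower bound $c_0$ from Lemma~\ref{lemma-psi-properties}(v), and the assumption $m\|a_m-\mathrm{Id}\|_\infty\le C_a$. None of them depends on $g$, so nothing new has to be proved.

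If desired, one could add a remark that $\omega(f_\lambda,\cdot)$ can be bounded via
\[
|f_\lambda(u)-f_\lambda(v)|\le e^{-\lambda c}\,\omega(f,|u-v|)+\|f\|_\infty\,\lambda e^{-\lambda c}\,|u-v|,
\]
but the corollary as stated does not require this.
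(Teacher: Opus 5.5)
Your proposal is correct and matches the paper, which obtains this corollary by specializing the preceding modulus-of-continuity theorem to $g(v)=e^{\lambda v}$, so that $M_g=M_{\exp}=e^{\lambda d}$ and $f_g=f_\lambda$, while $\mathcal{M}_1(\varrho_\rho)$, $c_0$ and $C_a$ do not depend on $g$ and carry over unchanged. Your optional estimate of $\omega(f_\lambda,\cdot)$ in terms of $\omega(f,\cdot)$ is also correct, though the corollary does not need it.
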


\section{Numerical Experiments}
In this section, we present numerical experiments to illustrate the approximation behaviour of the proposed $g$-preserving NNOs and to compare them with the existing operator $F_m$. All computations are carried out in MATLAB using the following procedure.

\begin{algorithm}[H]
\caption{Implementation of the $g$-Preserving NNOs $T_m^{(g)}$}
\begin{algorithmic}[1]
\Require Function $f:[c,d]\to\mathbb{R}$, strictly positive function $g:[c,d]\to(0,\infty)$, activation function $\varrho_\rho$, deformation mapping $a_m$, point $v\in[c,d]$.
\Ensure Approximation $T_m^{(g)}(f;v)$

\State Define index set $j = \lceil mc \rceil, \dots, \lfloor md \rfloor$
\State Compute $u = a_m(v)$
\State Compute weights $w_j = \varrho_\rho(mu - j)$
\State Compute $W = \sum\limits_j w_j$

\If{$W = 0$}
    \State \textbf{error:} normalization failure
\EndIf
\State Set $\tilde{w}_j = w_j / W$
\State Evaluate $f_j = f(j/m)$ and $g_j = g(j/m)$

\If{any $g_j \le 0$}
    \State \textbf{error:} positivity of $g$ violated
\EndIf
\State Compute
\[
T_m^{(g)}(f;v) = g(v)\sum_j \tilde{w}_j \frac{f_j}{g_j}
\]
\State \Return $T_m^{(g)}(f;v)$.
\end{algorithmic}
\end{algorithm}

\subsection{Test and Structure Functions}
Consider the function
\[ f(v)=e^{\lambda v}+\ln(1+\mu+v), \quad v\in[0,1], \] which is smooth and exhibits exponential and logarithmic growth.

\noindent
To investigate the influence of the structure function, we consider the following choices:
\[ g_{\log}(v)=\ln(1+\mu+v), \quad g_{\exp}(v)=e^{\lambda v}, \] with parameters $\mu=1.5$ and $\lambda=0.5$. Both functions are strictly positive and continuous on $[0,1]$, ensuring that the assumptions of the theoretical framework are satisfied.

The activation function is chosen as the logistic sigmoid
\[\rho(v)=\frac{1}{1+e^{-v}}, \quad \varrho_\rho(v)=\frac{1}{2}\big(\rho(v+1)-\rho(v-1)\big).\]
The deformation function is defined by
\[ a_m(v)=v+\frac{1}{m}\,v(1-v), \quad v\in[0,1], \] which represents a small, boundary-preserving perturbation of the identity. In particular, it satisfies
\[ a_m([0,1])\subseteq[0,1], \] and
\[ \|a_m-\mathrm{Id}\|_\infty = \sup_{v\in[0,1]} \left|\frac{1}{m}v(1-v)\right|
= \frac{1}{4m} \to 0 \quad \text{as } m\to\infty.\]
Moreover,
\[ m\|a_m-\mathrm{Id}\|_\infty \le \frac{1}{4}, \] which is consistent with the theoretical assumptions.

We compare the existing operator $F_n$ with the proposed function-preserving operators
\[ T_m^{(\log)} \quad \text{and} \quad T_m^{(\exp)}, \] corresponding to the structure functions $g_{\log}$ and $g_{\exp}$, respectively.

\subsection{Graphical Comparison}
Figure~\ref{fig:approximation} depicts how the approximation of the function $f$ using the existing operator and the function-preserving operators for a representative value of $m$.

\begin{figure}[H]
\centering
\includegraphics[width=0.8\textwidth]{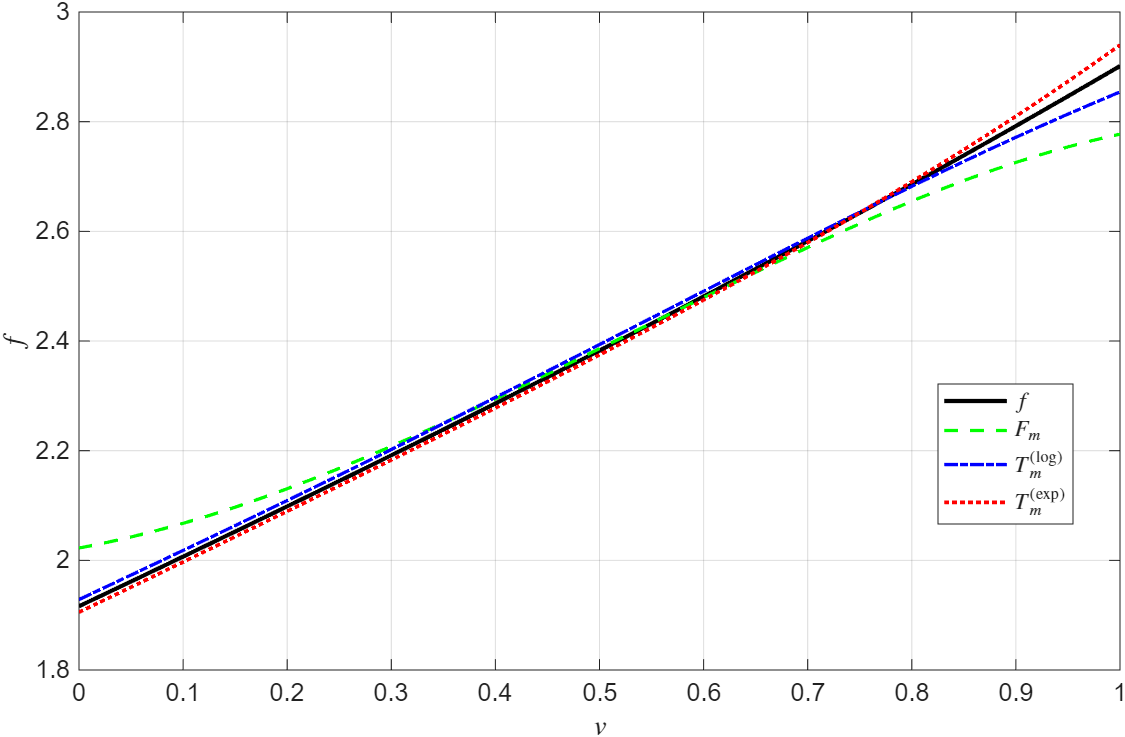}
\caption{Approximation of $f$ using $F_m$, $T_m^{(\log)}$ and $T_m^{(\exp)}$ for $m=10$.}
\label{fig:approximation}
\end{figure}

All operators provide accurate approximations of the target function. Depending on the selected function $g$, the function-preserving operators adjust slightly, resulting in minor differences in the approximation quality, especially for moderate values of $m$.

 In order to verify the reproduction property, 
\[ T_m^{(g)}(g;v)=g(v). \] We use the corresponding operator to approximate each structure function.

\begin{figure}[H]
\centering
\includegraphics[width=1\textwidth]{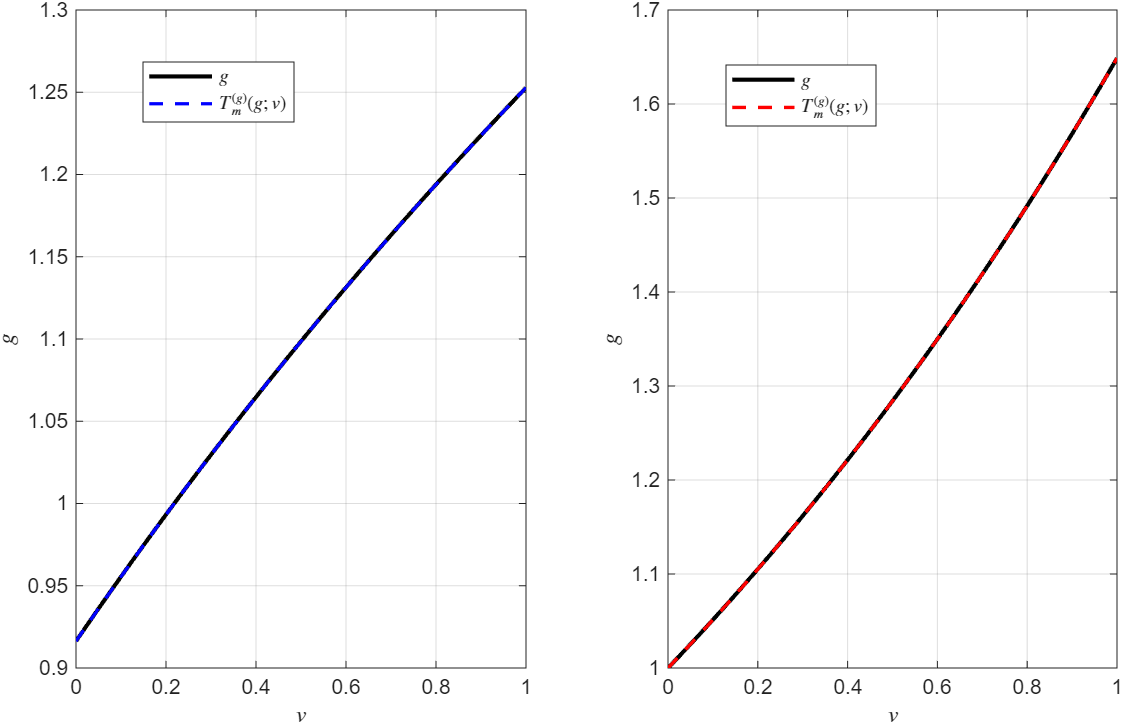}
\caption{Verification of the function-preserving property for $g_{\log}$ and $g_{\exp}$.}
\label{fig:preservation}
\end{figure}

\begin{table}[th]
\centering
\begin{tabular}{|lc|}
\hline
Quantity & Value \\
\hline
Reproduction error ($\log$) & $5.5511 \times 10^{-16}$ \\
Reproduction error ($\exp$) & $6.6613 \times 10^{-16}$ \\
\hline
\end{tabular}
\caption{Observed reproduction errors for the proposed operators.}
\label{tab:repro_errors}
\end{table}

The numerical results in Figure~\ref{fig:preservation} and Table~\ref{tab:repro_errors} confirm that the operators reproduce the corresponding structure functions with very high accuracy.

\subsection{Error Analysis}
To quantify the approximation performance, we compute the supremum errors
\[E_m = \|F_m(f)-f\|_\infty, \quad E_m^{(\log)} = \|T_m^{(\log)}(f)-f\|_\infty, \quad E_m^{(\exp)} = \|T_m^{(\exp)}(f)-f\|_\infty.\]
The error decay on the log-log scale is presented in Figure~\ref{fig:error-log}.

\begin{figure}[H]
\centering
\includegraphics[width=0.8\textwidth]{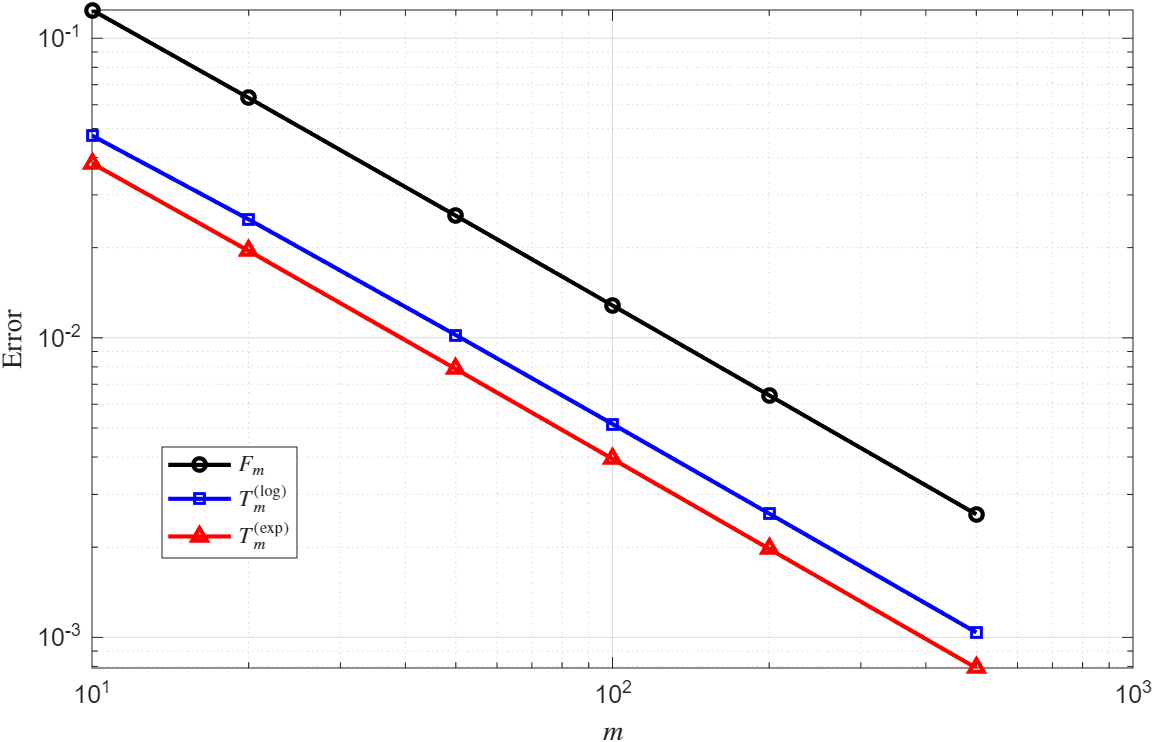}
\caption{Log-log plot of the approximation errors vs. $m$.}
\label{fig:error-log}
\end{figure}

\noindent
The results indicate that all operators exhibit similar convergence rates. The slopes of the log-log plots are close to $-1$, which is consistent with the expected behaviour for continuous Lipschitz functions.

\noindent
For completeness, the error behaviour on a normal scale is shown in Figure~\ref{fig:error-normal} and Table~\ref{tab:sup_errors}.

\begin{table}[th]
\centering
\begin{tabular}{|l|c|c|c|l|}
\hline
$m$ & $E_m$  & $E_m^{(\log)}$  & $E_m^{(\exp)}$  \\
\hline
10  & $1.242902\times 10^{-1}$ & $4.735552\times 10^{-2}$ & $3.827629\times 10^{-2}$ \\
20  & $6.323344\times 10^{-2}$ & $2.485107\times 10^{-2}$ & $1.951285\times 10^{-2}$ \\
50  & $2.556789\times 10^{-2}$ & $1.021837\times 10^{-2}$ & $7.870158\times 10^{-3}$ \\
100 & $1.283134\times 10^{-2}$ & $5.155500\times 10^{-3}$ & $3.944216\times 10^{-3}$ \\
200 & $6.427675\times 10^{-3}$ & $2.589334\times 10^{-3}$ & $1.974244\times 10^{-3}$ \\
500 & $2.573971\times 10^{-3}$ & $1.038515\times 10^{-3}$ & $7.901916\times 10^{-4}$ \\
\hline
\end{tabular}
\caption{Supremum uniform errors for the operators $F_m$, $T_m^{(\log)}$, and $T_m^{(\exp)}$.}
\label{tab:sup_errors}
\end{table}

\begin{figure}[H]
\centering
\includegraphics[width=0.8\textwidth]{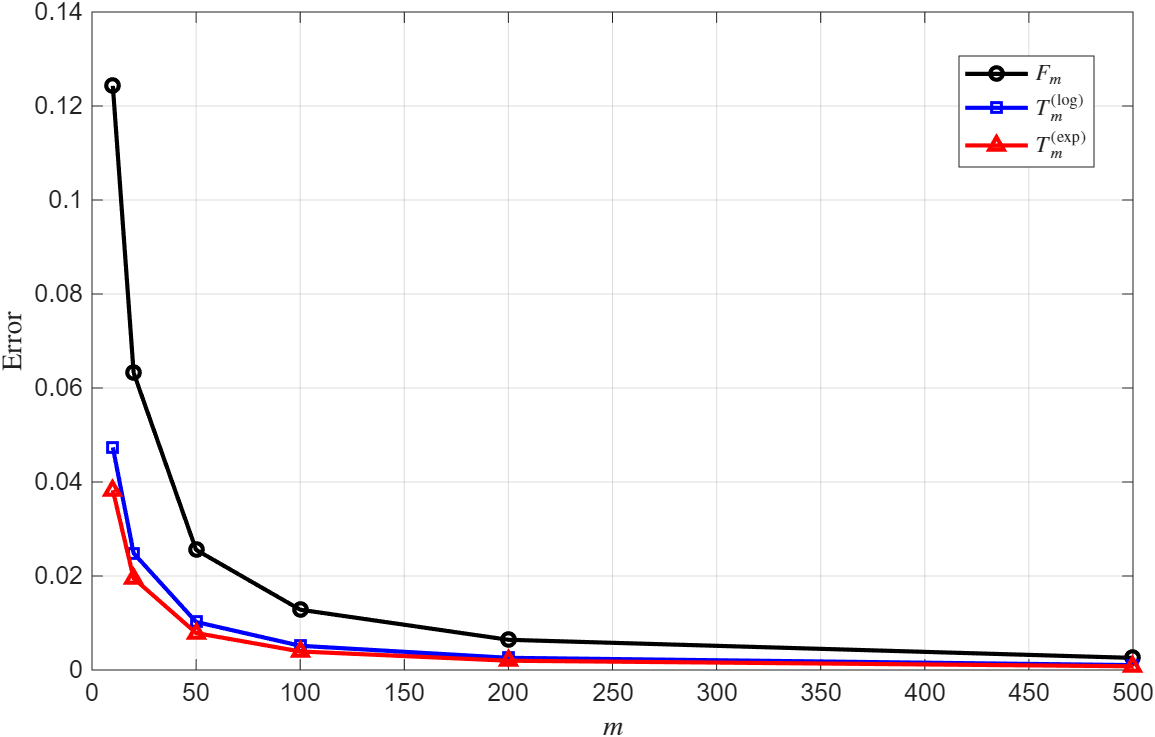}
\caption{Supremum error vs. $m$.}
\label{fig:error-normal}
\end{figure}

\noindent
It is observed that function-preserving operators can achieve slightly improved accuracy compared to the existing operator, depending on how well the chosen structure function reflects the behaviour of $f$.

\subsection{Discussion}
The experiments demonstrate that all operators achieve the same order of approximation, while the choice of structure function $g$ influences the accuracy for finite $m$. In particular, improved performance is observed when the structure function captures the dominant features of the target function.

\noindent
This behaviour is consistent with the theoretical estimate
\[ |T_m^{(g)}(f;v)-f(v)| \le C\,\omega\!\left(\frac{f}{g},\frac{1}{m}\right), \]
which shows that the approximation error depends on the smoothness of the transformed function $f/g$.

Overall, the numerical results show that the $g$-preserving NNOs give a flexible and efficient framework for function-preserving approximation and validate the theoretical conclusions.

\section{Application: Denoising of Modulated Signals}
In this section, the efficiency of the proposed NNOs for denoising oscillatory signals with a multiplicative structure is demonstrated. In particular, we consider signals of the kind
\[ f(v) = g(v)\,h(v), \quad v \in [0,2], \] where $h(v)$ refers to a high-frequency carrier component and $g(v)$ represents a slowly varying envelope.

\subsection{Test Signal Construction}
We take 
\(g(v) = \sin(2\pi v), \ h(v) = \cos(40\pi v),\) so that
\[f(v) = \sin(2\pi v)\cos(40\pi v).\] The aforementioned signal is an appropriate test case for approximation under noise, as it exhibits fast oscillations modulated by a smooth envelope.

A noisy signal is generated as
\[f_{\text{noisy}}(v) = f(v) + \varepsilon(v),\] where $\varepsilon(v)$ is Gaussian noise with zero mean and variance $\sigma^2 = 0.15^2$. 

Numerical instability may result from a direct ratio-based approximation since the envelope $g(v)$ vanishes at certain points. We address this by introducing a modified envelope that is strictly positive:
\[g_{\text{safe}}(v) = | \sin(2\pi v) | + 0.2,\]
which assures
\[g_{\text{safe}}(v) \geq 0.2, \quad \forall v \in [0,2].\]
\subsection{Operator Setup}
The NNOs, as defined in \ref{def-gNNO} are taken into consideration
\[T_m^{(g)}(f;v) = g_{\text{safe}}(v) \sum_{j=\lceil mc \rceil}^{\lfloor md \rfloor} \omega_{j,m}(v)\, \frac{f_{\text{noisy}}(j/m)}{g_{\text{safe}}(j/m)},\]
where the normalized weights are given by
\[\omega_{j,m}(v) = \frac{\varrho_\rho\big(m a_m(v) - j\big)}
{\sum_{k=\lceil mc \rceil}^{\lfloor md \rfloor} \varrho_\rho\big(m a_m(v) - k\big)}.\]
Here, the activation function is defined via a $\tanh$ sigmoidal function as
\[\rho(v) = \frac{1 + \tanh(v)}{2}, \qquad \varrho_\rho(v) = \frac{1}{2}\big(\rho(v+1) - \rho(v-1)\big),
\] and deformation mapping is given by \[a_m(v) = v + \frac{1}{m}v(1-v).\]
\subsection{Numerical Results}
The performance of the operator $T_m^{(g)}$ for $m=150$ is shown in Figure~\ref{fig:denoising}, where the noisy signal, the envelope, and the reconstructed signal are compared.

\begin{figure}[H]
\centering
\includegraphics[width=0.9\textwidth]{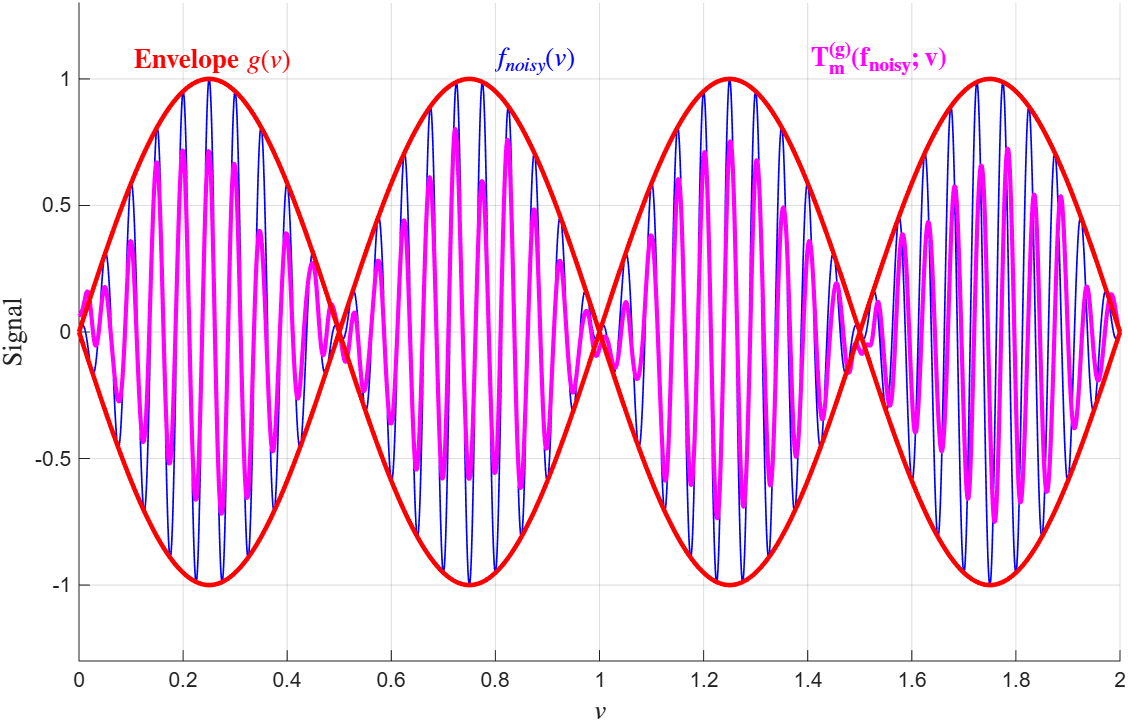}
\caption{A modulated signal is denoised using the operator $T_m^{(g)}$. The envelope (red), noisy signal (blue), and reconstructed signal (magenta) are presented.}
\label{fig:denoising}
\end{figure}

\subsection{Discussion}
The numerical results show that the proposed operator effectively reduces noise while preserving the signal's underlying harmonic structure. The main benefits of this strategy are:

\begin{itemize}
\item \textbf{Structure-aware approximation:} The approach improves accuracy by approximating the smoother component $f/g$ rather than directly calculating $f$.

\item \textbf{Stability:} Near the envelope's zeros, numerical instability is avoided by introducing $g_{\text{safe}}$.

\item \textbf{Noise robustness:} The averaging nature of the operator reduces the impact of additive noise.
\end{itemize}

The potential of proposed NNOs in signal processing applications is demonstrated by this example, especially for structured signals with multiplicative components.

\section{Conclusion}
In this work, we develop a novel class of \emph{function-preserving neural network operators (NNOs)} by adding a strictly positive function $g$ into existing NNOs constructions, and the approximation method is able to more accurately reflect the target function's intrinsic structure.

It is demonstrated that the proposed operators are linear, positive, well-defined, and able to precisely mimic the function $g$. A detailed analysis of their approximation behaviour establishes uniform convergence on compact intervals and derives error estimates in terms of the modulus of continuity.

Furthermore, numerical studies corroborate the theoretical results by showing that, particularly for moderate values of the discretization parameter, an appropriate choice of the function $g$ increases approximation accuracy without impacting the convergence rate. In addition, the practical applicability of the proposed operators is demonstrated through their use in modulated signal denoising, where the underlying multiplicative structure of the signal is effectively exploited to achieve stable and accurate reconstruction in the presence of noise.

Naturally, the operator described here provides several opportunities for further investigation. The development of Kantorovich-type and higher-order operator variations is one important extension of these. Further understanding of the approximation behaviour and the identification of optimal function classes would be made possible by the investigation of asymptotic results, such as saturation events and Voronovskaja-type theorems.

\bigskip
\noindent
\textbf{Declaration of competing interest}

The author affirms that the work presented in this paper is not influenced by any conflicting financial interests or personal connections.

\noindent\textbf{Author contributions} 

The author contributed to all aspects of the work, including its conception, analysis, and writing.

\noindent\textbf{Data availability}

No datasets were generated or analyzed during the current study.

\noindent\textbf{Funding}

The author declares that no funds, grants, or other financial support were received for the research, authorship, or publication of this work.

\bibliographystyle{elsarticle-num}
\bibliography{Ref}

@article{cardaliaguet1992approximation,
  title={Approximation of a function and its derivative with a neural network},
  author={Cardaliaguet, Pierre and Euvrard, Guillaume},
  journal={Neural Netw.},
  volume={5},
  number={2},
  pages={207--220},
  year={1992},
  publisher={Elsevier}
}

@article{anastassiou1997rate,
  title={Rate of convergence of some neural network operators to the unit-univariate case},
  author={Anastassiou, George A},
  journal={J. Math. Anal. Appl.},
  volume={212},
  number={1},
  pages={237--262},
  year={1997},
  publisher={Elsevier}
}

@article{anastassiou2000rate,
  title={Rate of convergence of some multivariate neural network operators to the unit},
  author={Anastassiou, George A},
  journal={Comput. Math. Appl.},
  volume={40},
  number={1},
  pages={1--19},
  year={2000},
  publisher={Elsevier}
}

@article{anastassiou2011multivariatesigmoidal,
  title={Multivariate sigmoidal neural network approximation},
  author={Anastassiou, George A},
  journal={Neural Netw.},
  volume={24},
  number={4},
  pages={378--386},
  year={2011},
  publisher={Elsevier}
}

@article{anastassiou2013frac,
  title={Fractional approximation by normalized bell and squashing type neural network operators},
  author={Anastassiou, George A},
  journal={New Math. Nat. Comput.	},
  volume={9},
  number={01},
  pages={43--63},
  year={2013},
  publisher={World Scientific}
}

@article{anastassiou2017approximations,
  title={Approximations by multivariate perturbed neural network operators},
  author={Anastassiou, George A},
  journal={Analysis and Applications},
  volume={15},
  number={03},
  pages={413--432},
  year={2017},
  publisher={World Scientific}
}

@article{costarelli2013approxi,
  title={Approximation results for neural network operators activated by sigmoidal functions},
  author={Costarelli, Danilo and Spigler, Renato},
  journal={Neural Netw.},
  volume={44},
  pages={101--106},
  year={2013},
  publisher={Elsevier}
}

@article{costarelli2014convergence,
  title={Convergence of a family of neural network operators of the {K}antorovich type},
  author={Costarelli, Danilo and Spigler, Renato},
  journal={J. Approx. Theory},
  volume={185},
  pages={80--90},
  year={2014},
  publisher={Elsevier}
}

@article{costarelli2022density,
  title={Density results by deep neural network operators with integer weights},
  author={Costarelli, Danilo},
  journal={Math. Model. Anal.},
  volume={27},
  number={4},
  pages={547--560},
  year={2022}
}

@article{costarelli2024asymptotic,
  title={Asymptotic analysis of neural network operators employing the {H}ardy-{L}ittlewood maximal inequality},
  author={Costarelli, Danilo and Piconi, Michele},
  journal={Mediterr. J. Math.},
  volume={21},
  number={7},
  pages={199},
  year={2024},
  publisher={Springer}
}

@article{baxhaku2025riemann,
  title={Riemann-{L}iouville Fractional Integral Type Deep Neural Network Kantorovich Operators},
  author={Baxhaku, Behar and Agrawal, Purshottam Narain and Bajpeyi, Shivam},
  journal={Iran. J. Sci.},
  volume={49},
  number={3},
  pages={711--724},
  year={2025},
  publisher={Springer}
}

@article{bajpeyi2021exponential,
  title={Approximation by exponential sampling type neural network operators},
  author={Bajpeyi, Shivam and Kumar, A. Sathish},
  journal={Anal. Math. Phys.},
  year={2021}
}

@article{bajpeyi2023order,
  title={Order of approximation for exponential sampling type neural network operators},
  author={Bajpeyi, Shivam},
  journal={Results Math.},
  volume={78},
  number={3},
  pages={99},
  year={2023},
  publisher={Springer}
}

@article{majethiya2025modular,
  title={Modular convergence of semi-discrete neural network operators in Orlicz space: P. Majethiya, S. Bajpeyi},
  author={Majethiya, Priyanka and Bajpeyi, Shivam},
  journal = {J. Pseudo-Differ. Oper. Appl.},
  volume={16},
  number={2},
  pages={27},
  year={2025},
  publisher={Springer}
}

@book{lorentz1986bernstein,
  title={Bernstein Polynomials},
  author={Lorentz, G.G.},
  year={1986},
  publisher={Chelsea Publishing Company}
}

@article{cardenas2006shape,
  title={Shape preserving approximation by {B}ernstein-type operators which fix polynomials},
  author={C{\'a}rdenas-Morales, D and Garrancho, P and Mu{\~n}oz-Delgado, Francisco-Javier},
  journal={Appl. Math. Comput.},
  volume={182},
  number={2},
  pages={1615--1622},
  year={2006},
  publisher={Elsevier}
}

@article{bede2009approximation,
  title={Approximation and Shape Preserving Properties of the {B}ernstein Operator of Max-Product Kind},
  author={Bede, Barnabas and Coroianu, Lucian and Gal, Sorin G},
  journal={Int. J. Math. Math. Sci.},
  volume={2009},
  number={1},
  pages={590589},
  year={2009},
  publisher={Wiley Online Library}
}

@article{aldaz2009shape,
  title={Shape preserving properties of generalized {B}ernstein operators on extended Chebyshev spaces},
  author={Aldaz, JM and Kounchev, Ognyan and Render, Hermann},
  journal={Numer. Math.},
  volume={114},
  number={1},
  pages={1--25},
  year={2009},
  publisher={Springer}
}

@article{acu2024new,
  title={A new representation and shape-preserving properties of perturbed {B}ernstein operators},
  author={Acu, Ana-Maria and Mutlu, G{\"o}khan and {\c{C}}ekim, Bayram and Yaz{\i}c{\i}, Serdal},
  journal={Math. Methods Appl. Sci.},
  volume={47},
  number={1},
  pages={5--14},
  year={2024},
  publisher={Wiley Online Library}
}

@article{su2022shape,
  title={On the shape-preserving properties of $\lambda$-{B}ernstein operators},
  author={Su, Lian-Ta and Mutlu, G{\"o}khan and {\c{C}}ekim, Bayram},
  journal={J. Inequal. Appl.},
  volume={2022},
  number={1},
  pages={151},
  year={2022},
  publisher={Springer}
}

@article{ayman2025shape,
  title={Shape preserving approximation properties to family of $\alpha$-{B}ernstein shifted knot operators},
  author={Ayman-Mursaleen, Mohammad and Alshaban, Esmail and Nasiruzzaman, Md},
  journal={J. Inequal. Appl.},
  volume={2025},
  number={1},
  pages={107},
  year={2025},
  publisher={Springer}
}

@article{berisha2025riemann,
  title={Riemann-{L}iouville fractional exponential sampling type neural network {K}antorovich operators},
  author={Berisha, Artan and Baxhaku, Fesal and Agrawal, Purshottam Narain and Baxhaku, Behar},
  journal={Eng. Appl. Artif. Intell.},
  volume={151},
  pages={110477},
  year={2025},
  publisher={Elsevier}
}

@article{saini2026approximation,
  title={Approximation capabilities of certain nonlinear neural network operators},
  author={Saini, Sachin and Singh, Uaday},
  journal={J. Comput. Appl. Math.},
  pages={117452},
  year={2026},
  publisher={Elsevier}
}

@article{costarelli2022quantitative,
  title={Quantitative estimates for neural network operators implied by the asymptotic behaviour of the sigmoidal activation functions},
  author={Coroianu, Lucian and Costarelli, Danilo and Kadak, U{\u{g}}ur},
  journal={Mediterr. J. Math.},
  volume={19},
  number={5},
  pages={211},
  year={2022},
  publisher={Springer}
}

@article{costarelli2024durrmeyer,
  title={The approximation capabilities of Durrmeyer-type neural network operators},
  author={Coroianu, Lucian and Costarelli, Danilo and Natale, Mariarosaria and Panti{\c{s}}, Alexandra},
  journal={J. Appl. Math. Comput.},
  pages={1--19},
  year={2024},
  publisher={Springer}
}

@article{kadak2023neural,
  title={Neural network operators of generalized fractional integrals equipped with a vector-valued function},
  author={Kadak, Ugur and Costarelli, Danilo and Coroianu, Lucian},
  journal={Chaos, Solitons \& Fractals},
  volume={177},
  pages={114272},
  year={2023},
  publisher={Elsevier}
}

@article{lin2026construction,
  title={Construction and approximation rate of neural network operators for Broad Learning System},
  author={Lin, Huijie and Yu, Dansheng and Cao, Feilong},
  journal={Chaos, Solitons \& Fractals},
  volume={207},
  pages={118048},
  year={2026},
  publisher={Elsevier}
}

@article{chen2015approximation,
  title={Approximation by network operators with logistic activation functions},
  author={Chen, Zhixiang and Cao, Feilong and Hu, Jinjie},
  journal={Appl. Math. Comput.},
  volume={256},
  pages={565--571},
  year={2015},
  publisher={Elsevier}
}

@article{qian2022neural,
  title={Neural network interpolation operators activated by smooth ramp functions},
  author={Qian, Yunyou and Yu, Dansheng},
  journal={Analysis and Applications},
  volume={20},
  number={04},
  pages={791--813},
  year={2022},
  publisher={World Scientific}
}

@article{xue2025direct,
  title={Direct and Converse Results of Approximation by {K}antorovich Type Neural Network Operators in Orlicz Spaces: R. Xue and D. Yu},
  author={Xue, Renyi and Yu, Dansheng},
  journal={Mediterr. J. Math.},
  volume={22},
  number={6},
  pages={145},
  year={2025},
  publisher={Springer}
}
\end{document}